\newcommand{\sech}{{\rm \,sech}}
\newcommand{\R}{{\mathbb R}}
\newcommand{\Lum}{{\mathcal{L}_1}}
\newcommand{\Ldois}{{\mathcal{L}_2}}
\newcommand{\I}{{\mathcal{I}}}
\newcommand{\Z}{{\mathcal{Z}}}
\newcommand{\RR}{{\mathcal{R}}}
\newcommand{\ve}{{\varepsilon}}
\newcommand{\Ll}{\mathbb{L}}
\newcommand{\Hh}{\mathbb{H}}
\newcommand{\vf}{{\varphi}}
\newcommand{\ra}{{\rangle}}
\newcommand{\la}{{\langle}}
\numberwithin{equation}{section}
\newtheorem{theorem}{Theorem}[section]
\newtheorem{proposition}[theorem]{Proposition}
\newtheorem{remark}[theorem]{Remark}
\newtheorem{lemma}[theorem]{Lemma}
\newtheorem{corollary}[theorem]{Corollary}
\newtheorem{definition}[theorem]{Definition}
\begin{document}
\vglue-1cm \hskip1cm
\title[Fourth-order dispersive NLS equation]{The Fourth-order dispersive nonlinear Schr\"odinger equation: Orbital stability of a standing wave}

\keywords{Forth-oder NLS equation; Standing waves; Orbital stability}

\maketitle

\begin{center}
{\bf F\'abio Natali}

{Departamento de Matem\'atica - Universidade Estadual de Maring\'a\\
Avenida Colombo, 5790, CEP 87020-900, Maring\'a, PR, Brazil.}\\
{ fmnatali@hotmail.com}

\vspace{3mm}

{\bf Ademir Pastor}

{ IMECC-UNICAMP\\
Rua S\'ergio Buarque de Holanda, 651, CEP 13083-859, Campinas, SP,
Brazil.  } \\{ apastor@ime.unicamp.br}
\end{center}

\begin{abstract}
Considered in this report is the one-dimensional fourth-order dispersive cubic nonlinear Schr\"odinger equation with mixed dispersion. Orbital stability, in the energy space, of a particular standing-wave solution is proved in the context of Hamiltonian systems. The main result is established by constructing a suitable Lyapunov function.
\end{abstract}

\section{Introduction}

Considered here is the cubic fourth-order Schr\"odinger equation with mixed dispersion
\begin{equation}\label{nls}
iu_t+u_{xx}-u_{xxxx}+|u|^2u=0,
\end{equation}
where $x,t\in\R$ and $u=u(x,t)$ is a complex-valued function. Equation \eqref{nls} was introduced in \cite{kar} and \cite{kar1} and it appears in the propagation of intense laser beams in a bulk medium with Kerr nonlinearity when small fourth-order
dispersion are taking into account. In addition, equation $(\ref{nls})$ has been considered in connection
with the nonlinear fiber optics and the theory of optical solitons in gyrotropic media. From the mathematical point of view, \eqref{nls} brings some interesting questions because it does not enjoy scaling invariance.

The $n$-dimensional counterpart of \eqref{nls},
$$
iu_t+\Delta u-\Delta^2u+|u|^2u=0,
$$
and the {\it biharmonic} cubic nonlinear Schr\"odinger equation
$$
iu_t+\epsilon\Delta^2u+|u|^2u=0, \quad \epsilon=\pm1,
$$
has attracted the attention of many researchers in the past few years (see \cite{bks}, \cite{cg}, \cite{fip}, \cite{mxz}, \cite{pa}, \cite{pa1} \cite{pax}, \cite{zyz1}, \cite{zyz2},\cite{zyz3} and references therein). In most of the mentioned manuscripts, the authors are concerned with local and global well-posedness and formation of singularities as well.  In particular, in various scenarios, such equations present similar dynamics as the standard cubic nonlinear Schr\"odinger equation.

In this paper, we are particularly interested in the existence and nonlinear stability of standing-wave solutions for \eqref{nls}. To the best of our knowledge, this issue has not been addressed in the current literature and such a study could lead us to a better understanding of the dynamics associated with \eqref{nls}. Standing waves are finite-energy waveguide solutions of \eqref{nls} having the form
\begin{equation}\label{standing}
u(x,t)=e^{i\alpha t}\phi(x),
\end{equation}
where $\alpha$ is a real constant and $\phi:\R\to\R$ is a smooth function  satisfying $\phi(x)\to0$, as $|x|\to+\infty$.
 From the numerical point of view, existence of standing-wave solutions and stability with respect to both small perturbations and finite disturbances, for the generalized equation
\begin{equation}\label{kareq}
iu_t+u_{xx}-u_{xxxx}+|u|^{2p}u=0
\end{equation}
were addressed, for instance, in \cite{kar2} and \cite{kar3}. The numerical investigation shows the existence of standing waves for any integer $p\geq1$. In addition, such solutions are stable if $1\leq p\leq2$ and unstable otherwise.

Here, we specialize \eqref{kareq} in the case $p=1$ and exhibit an explicit solution for a suitable value of the parameter $\alpha$. Indeed, by substituting \eqref{standing} into \eqref{nls}, we obtain the fourth-order nonlinear ODE
\begin{equation}\label{soleq}
\phi''''-\phi''+\alpha \phi-\phi^3=0.
\end{equation}
The ansatz $\phi(x)=a\sech^2(bx)$ produces, for
\begin{equation}\label{velocity}
\alpha=\frac{4}{25},
\end{equation}
the solution (see also \cite{kar2} and \cite{waz})
\begin{equation}\label{solitary}
\phi(x)=\sqrt{\frac{3}{10}}\sech^2\left(\sqrt{\frac{1}{20}}\,x\right).
\end{equation}
Hence our main interest in the present paper is to show that the standing
wave \eqref{standing} with $\phi$ given in \eqref{solitary} is
orbitally stable in the energy space, complementing the results in
\cite{kar2}.

Roughly speaking, we say that the standing wave $\phi$
is \textit{orbitally stable}, if the profile of an initial data
$u_0$ for (\ref{nls}) is close to $\phi$, then the associated
evolution in time $u(t)$, with $u(0)=u_0$, remains close to $\phi$,  up to symmetries, for
all values of $t$ (see Definition \ref{stadef} for the precise definition). As pointed out in \cite{kar2}, the problem of
stability is close related with the stabilization of the
self-focusing and the collapse by high order dispersion. This brings
fundamental importance to the nonlinear wave dynamics.

The strategy to prove our stability result is based on the construction of a suitable Lyapunov functional. In fact, we follow the leading arguments in \cite{st}, where the author established the orbital stability of standing waves for abstract Hamiltonian systems of the form
\begin{equation}\label{stha}
Ju_t(t)=H'(u(t))
\end{equation}
posed on a Hilbert space $X$, where $J$ is an invertible bounded operator in $X$. In particular, it is assumed in \cite{st}  that \eqref{stha} is invariant under the action of a one-dimensional group. This enabled the author to prove the orbital stability of standing waves for a large class of nonlinear Schr\"odinger-type equation with potential. It should be noted, however, that the general theory presented in \cite{st} cannot be directly  applied to our case because \eqref{nls} is invariant under phase and translation symmetries. This implies that \eqref{nls} is invariant under the action of a two-dimensional group. Hence, in the present,  we modify the approach in \cite{st} in order to enclose our problem.

It is well-understood by now that the general stability theory
developed in \cite{gss1} and \cite{gss2} is a powerful tool to prove
the orbital stability of standing-wave solutions for abstract Hamiltonian systems. One of the
assumptions in such a theory is that the underlying standing wave
belongs to a smooth curve of standing waves, $\alpha\in
I\mapsto\phi_{\alpha}$, depending on the phase
parameter $\alpha$. In the meantime, \eqref{soleq} does not possesses another
solution of the form \eqref{solitary} for $\alpha\neq4/25$.  This
prevents us in using the stability theory in \cite{gss1}, \cite{gss2} as well as the classical theories as in \cite{bss},
\cite{bo}, and \cite{we}. The method we use below does not require this type of information.

As is well-known, most of the general results in the stability theory rely on a deep spectral analysis for the linear operator arising in the linearization of the equation around the standing wave. In our context, such an operator turns out to be a matrix operator containing fourth-order Schr\"odinger-type operators in the principal diagonal. To obtain the spectral properties we need, we make use of the {\it total positivity theory} developed in \cite{al}, \cite{albo}, and \cite{ka}.

At last, we point out that, from the mathematical point of view, orbital stability and instability of standing and traveling waves for fourth-order equations were studied in \cite{cjy} and \cite{le}, where the authors studied, respectively,
$$
iu_t+\Delta^2 u+V(x)u+|u|^{2\sigma}u=0
$$
and
$$
u_{tt}+\Delta^2u+u+f(u)=0,
$$
under suitable assumptions on the the potential $V$ and the nonlinearity $f$. In particular the classical variational approach introduced by Cazenave and Lions was used. It should be noted however that, since uniqueness is not known, the variational approach provides the orbital stability of the set of minimizers  and not the stability of the standing wave itself.

The paper is organized as follows. In Section \ref{not} we introduce some notation and give the preliminaries result. In particular, we review the total positivity theory for the study of the spectrum of linear operators given as pseudo-differential operators. Such a theory is fundamental to establish the spectral properties given in Section \ref{sepecsec}. Section \ref{stasec} is devoted to show our main results, where we construct a suitable Lyapunov function and prove our stability result for the standing wave \eqref{standing}.

\section{Notation and Preliminaries} \label{not}

Let us introduce some notation used throughout the paper. Given $s\in\R$, by $H^s:=H^s(\R)$ we denote the usual Sobolev space of real-valued functions. In particular $H^0(\R)\simeq L^2(\R)$. The scalar product in $H^s$ will be denoted by $(\cdot,\cdot)_{H^s}$. We set
$$
\Ll^2=L^2(\R)\times L^2(\R)\qquad
\mbox{and}\qquad
\Hh^s=H^s(\R)\times H^s(\R).
$$
Such spaces are endowed with their usual norms and scalar products.

It is not difficult to see that equation \eqref{nls} conserves the energy
\begin{equation}\label{hamiltocons}
E(u)=\frac{1}{2}\int_\R (|u_{xx}|^2+|u_x|^2-\frac{1}{2}|u|^4)\,dx,
\end{equation}
and the mass
\begin{equation}\label{mass}
F(u)=\frac{1}{2}\int_\R |u|^2\,dx.
\end{equation}

Equation \eqref{nls} can also be viewed   as a real Hamiltonian system. In fact,
by writing $u=P+iQ$ and separating real and imaginary parts, we see that
\eqref{nls} is equivalent to the system
\begin{equation}\label{equisys}
\left\{\begin{array}{cccc}
P_t+Q_{xx}-Q_{xxxx}+Q(P^2+Q^2)=0,\\
-Q_t+P_{xx}-P_{xxxx}+P(P^2+Q^2)=0
\end{array}\right.
\end{equation}
In addition, the quantities \eqref{hamiltocons} and \eqref{mass} become
\begin{equation}\label{hamiltocons1}
E(P,Q)=\frac{1}{2}\int_\R \bigg(P_{xx}^2+Q_{xx}^2+P_x^2+Q_x^2-\frac{1}{2}(P^2+Q^2)^2\bigg)\,dx,
\end{equation}
\begin{equation}\label{mass1}
F(P,Q)=\frac{1}{2}\int_\R (P^2+Q^2)\,dx.
\end{equation}
As a consequence, \eqref{equisys} or, equivalently, \eqref{nls} can be written as
\begin{equation}\label{hamiltonian}
\frac{d}{dt}U(t)=J E'(U(t)), \qquad U={P\choose Q},
\end{equation}
where $E'$ represents the Fr\'echet derivative of $E$ with respect to $U$, and
\begin{equation}\label{J}
J=\left(\begin{array}{cccc}
 0 & -1\\
1 & 0
\end{array}\right).
\end{equation}
It is easily seen that
\begin{equation}\label{A}
J^{-1}=-J.
\end{equation}

Note that \eqref{nls} is invariant under the unitary action of
rotation and translation, that is, if $u=u(x,t)$ is a solution of
\eqref{nls} so are $e^{-i\theta}u$ and $u(x-r,t)$, for any real numbers $\theta$ and $r$. Equivalently,
this means if $U=(P,Q)$ is a solution of \eqref{hamiltonian}, so are
\begin{equation}\label{l0}
T_1(\theta)U:=\left(\begin{array}{cccc}
 \cos\theta & \sin\theta\\
-\sin\theta & \cos\theta
\end{array}\right)
\left(\begin{array}{c}
 P\\
 Q
\end{array}\right)
\end{equation}
and
\begin{equation}\label{l00}
T_2(r)U:= \left(\begin{array}{c}
 P(\cdot-r,\cdot)\\
  Q(\cdot-r,\cdot)
\end{array}\right).
\end{equation}

The actions $T_1$ and $T_2$ define unitary groups in $\Hh^2$ with
infinitesimal generators given, respectively, by
\begin{equation*}
T_1'(0)U:=\left(\begin{array}{cccc}
 0 & 1\\
-1 & 0
\end{array}\right)
\left(\begin{array}{c}
 P\\
 Q
\end{array}\right)\equiv -J\left(\begin{array}{c}
 P\\
 Q
\end{array}\right)
\end{equation*}
and
\begin{equation*}
T_2'(0)U:=\partial_x \left(\begin{array}{c}
 P\\
  Q
\end{array}\right).
\end{equation*}

In this context, a standing wave solution having the form \eqref{standing} becomes a solution of \eqref{equisys} of the form
\begin{equation}\label{mstanding}
U(x,t)={\phi(x)\cos(\alpha t)\choose \phi(x)\sin(\alpha t)}.
\end{equation}
Thus, the  function $U$ in \eqref{mstanding},  with $\alpha$ and $\phi$ given, respectively, in \eqref{velocity} and \eqref{solitary}, is a standing wave solution of \eqref{equisys}.

It is easy to see, from \eqref{soleq}, that $(\phi,0)$ is a critical point of
the functional $E+\alpha F$, that is,
\begin{equation}\label{crit}
E'(\phi,0)+\alpha F'(\phi,0)=0.
\end{equation}

To simplify the notation, in what follows we set
\begin{equation}\label{Phi}
\Phi:=(\phi,0)
\end{equation}
and
\begin{equation}\label{linear}
G:=E+\alpha F,
\end{equation}
in such a way that \eqref{crit} becomes $G'(\Phi)=0$.

Let us now introduce the linear operator
\begin{equation}     \label{L}
\mathcal{L}:= \left(\begin{array}{cccc}
\mathcal{L}_1 & 0\\\\
0 & \mathcal{L}_2
\end{array}\right),
\end{equation}
where
\begin{equation}\label{L1}
\mathcal{L}_1:= \partial_x^4-\partial_x^2+\alpha-3\phi^2
\end{equation}
and
\begin{equation}\label{L2}
\mathcal{L}_2:= \partial_x^4-\partial_x^2+\alpha-\phi^2.
\end{equation}
This operator appears in the linearization of \eqref{equisys} around the wave $\Phi$. The knowledge of its spectrum is cornerstone in the analysis to follow.

\subsection{Review of the total positivity theory}\label{reviewsec}

To study the spectrum of the above mentioned operators, we use the
total positivity theory established in \cite{al}, \cite{albo}, and
\cite{ka}. To begin with, let us recall the framework put forward in
\cite{al}, \cite{albo}. Let $\mathcal{T}$ be the operator defined on
a dense subspace of $L^2(\R)$ by
\begin{equation}\label{opT}
\mathcal{T}g(x)=Mg(x)+ \omega g(x)-\varphi^p(x)g(x),
\end{equation}
where $p\geq1$ is an integer, $\omega>0$ is a real parameter, $\varphi$ is real-valued solution of
$$
(M+\omega)\varphi=\frac{1}{p+1}\varphi^{p+1}.
$$
having a suitable decay at infinity, and $M$ is defined as a Fourier multiplier operator by
$$
\widehat{Mg}(\xi)=m(\xi)\widehat{g}(\xi).
$$
Here circumflexes denotes the Fourier transform, $m(\xi)$ is a measurable,
locally bounded, even function on $\R$ satisfying
\begin{itemize}
    \item[(i)] $A_1|\xi|^\mu\leq m(\xi)\leq A_2|\xi|^\mu$ for $|\xi|\geq
    \xi_0$;
    \item[(ii)] $m(\xi)\geq 0$;
\end{itemize}
where $A_1$, $A_2$, and $\xi_0$ are positive real constants, and $\mu\geq1$. Under the above assumptions we
have the following.

\begin{lemma}\label{gelema}
The operator $\mathcal{T}$ is a closed, unbounded, self-adjoint
operator on $L^2(\R)$ whose spectrum consists of the interval
$[\omega,\infty)$ together with a finite number of discrete
eigenvalues in the interval $(-\infty,\omega]$, in which all of them
have finite  multiplicity. In addition, zero is an
eigenvalue of $\mathcal{T}$ with eigenfunction  $\varphi'$.
\end{lemma}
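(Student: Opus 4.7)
The plan is to assemble the statement from three ingredients: self-adjointness via a bounded-perturbation argument, identification of the spectrum through Weyl's essential spectrum theorem, and direct verification of the zero eigenvalue by translation invariance.

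First I would set up the domain. Since $m$ is measurable, locally bounded, even, and non-negative by (ii), the Fourier multiplier $M$ is naturally defined on $D(M)=\{g\in L^2(\R):m\widehat{g}\in L^2(\R)\}$ and is self-adjoint there; $M+\omega$ inherits this with spectrum equal to the essential range of $m+\omega$. The polynomial growth (i) with $\mu\geq 1$ together with the assumption that $\mathrm{ess\,inf}\,m=0$ (which holds in all applications in the paper) gives $\sigma(M+\omega)=[\omega,\infty)$. Because the profile $\varphi$ decays at infinity, $V:=-\varphi^{p}$ is a bounded, real, multiplication operator; hence $\mathcal{T}=M+\omega+V$ is a bounded self-adjoint perturbation of $M+\omega$, which yields self-adjointness, closedness, and unboundedness of $\mathcal{T}$ on $D(M)$.

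Next I would apply Weyl's theorem to locate the essential spectrum. The key step is to show that $V(M+\omega-z)^{-1}$ is compact for some $z$ in the resolvent set of $M+\omega$: one approximates $\varphi^{p}$ by compactly supported cut-offs, uses Rellich--Kondrachov (available because the resolvent regularises into a Sobolev-type space locally compactly embedded in $L^2$, thanks to $\mu\geq 1$), and passes to the limit in operator norm via the spatial decay of $\varphi^{p}$. Weyl's theorem then gives $\sigma_{\mathrm{ess}}(\mathcal{T})=\sigma_{\mathrm{ess}}(M+\omega)=[\omega,\infty)$. The remainder of the spectrum lies in $(-\infty,\omega)$ and consists of isolated eigenvalues of finite multiplicity; boundedness below and a Birman--Schwinger/min--max argument in the style of \cite{al} and \cite{albo} then yields that the number of such eigenvalues is actually finite, completing item (ii).

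For the last claim I would differentiate the profile equation $(M+\omega)\varphi=\frac{1}{p+1}\varphi^{p+1}$ in $x$. Since $M$ is a Fourier multiplier it commutes with $\partial_x$, so
\begin{equation*}
(M+\omega)\varphi'=\varphi^{p}\varphi',
\end{equation*}
which rearranges to $\mathcal{T}\varphi'=0$. The assumed decay and smoothness of $\varphi$ guarantee $\varphi'\in D(M)\subset L^2(\R)$, so $\varphi'$ is a genuine eigenfunction associated with the eigenvalue $0$. The step I expect to be most delicate is the relative compactness of $V$, which underlies both Weyl's theorem and the finite-multiplicity assertion; the self-adjointness statement is a standard Kato--Rellich-type bookkeeping, and the $\varphi'$ kernel element is a one-line computation exploiting translation invariance of $M$.
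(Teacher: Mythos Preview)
Your proposal is correct and follows the standard route (bounded self-adjoint perturbation of a Fourier multiplier, Weyl's theorem via relative compactness of the decaying potential, and differentiation of the profile equation using translation invariance of $M$). The paper itself does not supply a proof: it simply refers to Proposition~2.1 in \cite{al}, so your sketch is in fact more detailed than what appears here, and it is precisely the argument carried out in the cited reference.
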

\begin{proof}
See Proposition 2.1 in \cite{al}.
\end{proof}

In order to obtain additional spectral properties of $\mathcal{T}$
let us introduce the family of operators $\mathcal{S}_\theta$, $\theta\geq0$,
on $L^2(\R)$ by
$$
\mathcal{S}_{\theta} g(x)=
\frac{1}{w_\theta(x)}\int_{\R}K(x-y)g(y)dy,
$$
where $K(x)=\widehat{\varphi^p}(x)$ and $w_\theta(x)=m(x)+\theta+\omega$.
These operators act on the Hilbert space
\begin{equation}\label{spaceX}
X=\left\{g\in L^2(\R);\,
\|g\|_{X,\theta}=\left(\int_{\R}|g(x)|^2w_\theta(x)dx\right)^{1/2}<\infty\right\}.
\end{equation}

Since $\mathcal{S}_{\theta}$ is a compact, self-adjoint operator on
$X$ with respect to the norm $\|\cdot\|_{X,\theta}$ (see Proposition
2.2 in \cite{al}), it has a family of eigenfunctions
$\{\psi_{i,\theta}(x)\}_{i=0}^\infty$ forming  an orthonormal basis
of $X$. Moreover, the corresponding eigenvalues
$\{\lambda_i(\theta)\}_{i=0}^\infty$ are real and can be numbered in
order of decreasing absolute value:
$$|\lambda_0(\theta)|\geq|\lambda_1(\theta)|\geq\ldots\geq0.$$

Let us recall some results of \cite{al} and \cite{albo}. The first
one is concerned with an equivalent formulation of the eigenvalue
problem associated with $\mathcal{T}$.

\begin{lemma}\label{multilema}
Suppose $\theta\geq0$. Then $-\theta$ is an eigenvalue of
$\mathcal{T}$ (as an operator on $L^2(\R)$) with eigenfunction $g$
if, and only if, $1$ is an eigenvalue of $\mathcal{S}_{\theta}$ (as
an operator on $X$) with eigenfunction $\widehat{g}$. In particular,
both eigenvalues have the same multiplicity.
\end{lemma}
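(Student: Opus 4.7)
The plan is to translate the eigenvalue problem for $\mathcal{T}$ on $L^2(\R)$ via the Fourier transform into the eigenvalue problem for $\mathcal{S}_\theta$ on $X$, using the convolution theorem. Since the map $g \mapsto \widehat{g}$ is essentially a bijective isometry, both directions of the equivalence and the equality of multiplicities will follow at once.

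First I would start from the eigenvalue equation $\mathcal{T}g=-\theta g$, rewriting it as
\begin{equation*}
Mg+(\omega+\theta)g=\varphi^p g.
\end{equation*}
Applying the Fourier transform to both sides and using the defining relation $\widehat{Mg}(\xi)=m(\xi)\widehat{g}(\xi)$ together with the convolution theorem $\widehat{\varphi^p g}=\widehat{\varphi^p}\ast\widehat{g}=K\ast\widehat{g}$, I obtain
\begin{equation*}
(m(\xi)+\omega+\theta)\widehat{g}(\xi)=\bigl(K\ast\widehat{g}\bigr)(\xi),
\end{equation*}
that is, $w_\theta(\xi)\widehat{g}(\xi)=(K\ast\widehat{g})(\xi)$. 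Since $m\ge 0$, $\omega>0$, and $\theta\ge 0$, the weight $w_\theta$ is bounded below by $\omega>0$, so I may divide and arrive exactly at $\widehat{g}=\mathcal{S}_\theta\widehat{g}$, i.e., $\widehat{g}$ is an eigenfunction of $\mathcal{S}_\theta$ associated with the eigenvalue $1$. The converse is obtained by reversing these steps: if $\mathcal{S}_\theta h=h$ with $h\in X$, I set $g=\mathcal{F}^{-1}h$ and retrace the equalities to recover $\mathcal{T}g=-\theta g$.

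The technical point I need to verify carefully is that the Fourier transform genuinely carries the $(-\theta)$-eigenspace of $\mathcal{T}$ in $L^2(\R)$ onto the $1$-eigenspace of $\mathcal{S}_\theta$ in $X$. For the forward direction, if $g\in L^2$ is an eigenfunction, Plancherel gives $\widehat{g}\in L^2$; and from the identity $w_\theta\widehat{g}=K\ast\widehat{g}$, with $K=\widehat{\varphi^p}\in L^\infty\cap L^2$ (using the decay of $\varphi$), the convolution $K\ast\widehat{g}$ lies in $L^2$, hence $w_\theta\widehat{g}\in L^2$, which together with $\widehat{g}\in L^2$ yields $\int|\widehat{g}|^2 w_\theta<\infty$ by Cauchy--Schwarz, i.e., $\widehat{g}\in X$. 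The reverse inclusion is analogous. I expect this membership bookkeeping between $L^2$ and $X$ to be the only subtle part of the argument.

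Finally, for the equality of multiplicities: the map $g\mapsto\widehat{g}$ is injective, and by the previous paragraph it restricts to a linear bijection between the $(-\theta)$-eigenspace of $\mathcal{T}$ in $L^2(\R)$ and the $1$-eigenspace of $\mathcal{S}_\theta$ in $X$. Linearly independent eigenfunctions are sent to linearly independent eigenfunctions, so the (finite, by Lemma~\ref{gelema} and the compactness of $\mathcal{S}_\theta$) multiplicities coincide, concluding the proof.
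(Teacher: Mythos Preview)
Your approach is correct and is precisely the standard argument from Albert's paper (Corollary~2.3 in \cite{al}), which the present paper simply cites without reproducing a proof. One small technical slip: from $K\in L^\infty\cap L^2$ and $\widehat{g}\in L^2$ you cannot conclude $K\ast\widehat{g}\in L^2$ (Young would need $K\in L^1$); the cleaner route is to use directly that an eigenfunction $g$ lies in the domain of $\mathcal{T}$, hence $m\,\widehat{g}\in L^2$, and then $\int w_\theta|\widehat{g}|^2=\int m|\widehat{g}|^2+(\omega+\theta)\|\widehat{g}\|_{L^2}^2\le\|m\widehat{g}\|_{L^2}\|\widehat{g}\|_{L^2}+(\omega+\theta)\|\widehat{g}\|_{L^2}^2<\infty$, giving $\widehat{g}\in X$ immediately.
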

\begin{proof}
See Corollary 2.3 in \cite{al}.
\end{proof}

The second result is a Krein-Rutman-type theorem.

\begin{lemma}\label{simplelema}
The eigenvalue $\lambda_0(0)$ of $\mathcal{S}_{0}$ is positive,
simple, and has a strictly positive eigenfunction $\psi_{0,0}(x)$.
Moreover, $\lambda_0(0)>|\lambda_1(0)|$.
\end{lemma}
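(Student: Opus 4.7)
The plan is to establish the lemma via a Krein--Rutman / Perron--Frobenius type argument applied to the compact self-adjoint operator $\mathcal{S}_0$ on the Hilbert space $X$. The kernel of $\mathcal{S}_0$ is
$$
\mathcal{K}(x,y) = \frac{K(x-y)}{w_0(x)} = \frac{\widehat{\varphi^p}(x-y)}{m(x)+\omega},
$$
and since $w_0(x)=m(x)+\omega>0$, the sign of $\mathcal{K}(x,y)$ is entirely controlled by that of $K(x-y)=\widehat{\varphi^p}(x-y)$.

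First, I would establish that $K(x)>0$ for every $x\in\R$. This is the heart of the matter: in the general framework of the total positivity theory of \cite{al}, \cite{albo}, such strict positivity of $\widehat{\varphi^p}$ is the key structural hypothesis underlying the whole method, and is verified in concrete cases either by direct computation or by P\'olya-frequency arguments. For the concrete profile of interest in this paper (a $\sech^2$ ground state) one can check $\widehat{\varphi^p}>0$ by an explicit residue computation, so the abstract machinery genuinely applies. Granted the pointwise positivity of $K$, for any nontrivial nonnegative $g\in X$ we would then have
$$
(\mathcal{S}_0 g)(x) = \frac{1}{w_0(x)}\int_{\R} K(x-y)\, g(y)\, dy > 0 \qquad \text{for every } x\in\R,
$$
so that $\mathcal{S}_0$ is positivity-improving on the natural cone of nonnegative functions of $X$.

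Second, I would invoke the Krein--Rutman theorem in its version for compact, positivity-improving self-adjoint operators (equivalently, Jentzsch's theorem for integral operators with a strictly positive kernel). This yields that the spectral radius $r(\mathcal{S}_0)$ is itself an eigenvalue, that it is geometrically and algebraically simple, that its eigenspace is spanned by a strictly positive function, and that every other eigenvalue $\lambda$ satisfies $|\lambda|<r(\mathcal{S}_0)$. Since the eigenvalues are ordered by decreasing absolute value, we identify $\lambda_0(0)=r(\mathcal{S}_0)$ and immediately obtain the three assertions: $\lambda_0(0)>0$, it is simple with strictly positive eigenfunction $\psi_{0,0}$, and $\lambda_0(0)>|\lambda_1(0)|$.

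The principal obstacle is the first step, namely the pointwise positivity of $\widehat{\varphi^p}$; this is not a soft fact and must be extracted from explicit information on the symbol $m$ together with the profile of $\varphi$. Once that positivity is secured, the remainder is a routine application of classical Perron--Frobenius-type machinery on Hilbert spaces.
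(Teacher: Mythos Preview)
Your proposal is correct and follows precisely the route the paper intends: the text immediately preceding the lemma explicitly calls it ``a Krein--Rutman-type theorem,'' and the proof in the paper is simply a citation to Lemma~8 of \cite{albo}, which carries out exactly the Perron--Frobenius/Jentzsch argument you outline for compact integral operators with strictly positive kernel. Your identification of the positivity $K=\widehat{\varphi^p}>0$ as the essential structural input is also right; in the Albert--Bona framework this is taken as a standing hypothesis (indeed it is part of the condition $K\in PF(2)$ appearing in Theorem~\ref{mainal}), so you may treat it as an assumption here rather than something to be proved within the lemma itself.
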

\begin{proof}
See Lemma 8 in \cite{albo}.
\end{proof}

Recall that a function $h:\R\to\R$ is said to be in the class PF(2)
if:
\begin{itemize}
\item[(i)] $h(x)>0$ for all $x\in\R$;
\item[(ii)] for any $x_1,x_2,y_1,y_2\in\R$ with $x_1<x_2$ and $y_1<y_2$, there holds
$$h(x_1-y_1)h(x_2-y_2)-h(x_1-y_2)h(x_2-y_1)\geq0;$$
\item[(iii)] strict inequality holds in (ii) whenever the intervals $(x_1,x_2)$ and $(y_1,y_2)$ intersect.
\end{itemize}

A sufficient condition for a function to be in class PF(2) is that it is logarithmically concave. More precisely, we have.

\begin{lemma}\label{logcon}
A twice differentiable  positive function $h:\R\to\R$ that satisfies
$(\log h(x))''<0$, $x\neq0$, belongs to the class $ PF(2)$.
\end{lemma}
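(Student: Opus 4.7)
The plan is to reduce both (ii) and (iii) to strict concavity of $f:=\log h$, which is well-defined since $h>0$. Given $x_1<x_2$ and $y_1<y_2$, I would introduce
$$a:=x_1-y_2,\qquad b:=x_2-y_1,\qquad c:=x_1-y_1,\qquad d:=x_2-y_2,$$
and verify the elementary relations $a<c<b$, $a<d<b$, and $c+d=a+b$. Writing $c=(1-t_1)a+t_1b$ and $d=(1-t_2)a+t_2b$ with
$$t_1=\frac{y_2-y_1}{(x_2-x_1)+(y_2-y_1)},\qquad t_2=\frac{x_2-x_1}{(x_2-x_1)+(y_2-y_1)},$$
both in $(0,1)$, the identity $c+d=a+b$ forces $t_1+t_2=1$. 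Two applications of strict concavity then add to give $f(c)+f(d)>f(a)+f(b)$, and exponentiation yields
$$h(x_1-y_1)h(x_2-y_2)>h(x_1-y_2)h(x_2-y_1),$$
which is stronger than (ii) and in particular implies (iii) (no assumption on the intervals intersecting is actually needed once one has strict concavity).

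So everything rests on showing that $f$ is strictly concave on all of $\R$. The hypothesis delivers $f''<0$ on each of the open half-lines $(-\infty,0)$ and $(0,\infty)$, hence $f'$ is strictly decreasing on each of them. The crucial point is that $f$ is twice differentiable on $\R$, so $f'$ is continuous at $0$; a short limiting argument (compare $f'(a)$ with $f'(a')$ for $a<a'<0$ and let $a'\uparrow 0$, and symmetrically on the right) promotes this to strict monotonicity of $f'$ on all of $\R$. A standard mean-value-theorem comparison, writing
$$\frac{f(c)-f(a)}{c-a}=f'(\xi_1),\qquad \frac{f(b)-f(c)}{b-c}=f'(\xi_2),$$
with $\xi_1\in(a,c)$, $\xi_2\in(c,b)$, and using $f'(\xi_1)>f'(\xi_2)$, then converts this into the strict concavity inequality on $\R$.

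The main---essentially only---obstacle is this first step: ensuring that the possible vanishing of $(\log h)''$ at the single point $x=0$ does not weaken \emph{strict} concavity to ordinary concavity. Once that is handled by the continuity-plus-monotonicity argument above, the four-point identity $c+d=a+b$ together with $t_1+t_2=1$ closes the proof by a direct application of strict concavity, with no further input from the hypotheses.
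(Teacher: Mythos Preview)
Your argument is correct. The reduction to strict concavity of $f=\log h$ via the identity $c+d=a+b$ with $c,d\in(a,b)$ is exactly the right idea, and your handling of the single point $x=0$ (using continuity of $f'$ to upgrade piecewise strict monotonicity to global strict monotonicity) is sound: for $a<0$ one has $f'(a)>f'(a/2)\geq\lim_{t\to0^-}f'(t)=f'(0)$, and symmetrically on the right. Your observation that strict concavity actually yields strict inequality in (ii) for \emph{all} $x_1<x_2$, $y_1<y_2$---not merely when $(x_1,x_2)\cap(y_1,y_2)\neq\varnothing$---is also correct and slightly stronger than what PF(2) demands.

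The paper itself does not give a proof here; it simply cites Lemma~4.3 in Albert and Lemma~10 in Albert--Bona. Your write-up is essentially the classical argument found in those references, so there is no genuine divergence in approach---you have just supplied the details the paper outsources. One cosmetic point: you should explicitly note that condition~(i) of PF(2), namely $h>0$, is part of the hypothesis, so only (ii) and (iii) require work.
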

\begin{proof}
See Lemma 4.3 in \cite{al} or Lemma 10 in \cite{albo}.
\end{proof}

The main theorem in \cite{al} reads as follows.

\begin{theorem}\label{mainal}
Suppose $\widehat{\varphi}>0$ on $\R$ and $\widehat{\varphi^p}=:K\in
PF(2)$. Then $\mathcal{T}$ satisfies the following.
\begin{itemize}
    \item[(P1)] $\mathcal{T}$ has a simple, negative eigenvalue $\kappa$;
    \item[(P2)] $\mathcal{T}$ has no negative eigenvalue other than $\kappa$;
    \item[(P3)] the eigenvalue $0$ of $\mathcal{T}$ is simple.
\end{itemize}
\end{theorem}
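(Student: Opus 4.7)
The plan is to reduce the problem to the spectral analysis of the compact family $\{\mathcal{S}_\theta\}_{\theta\geq 0}$ via the correspondence of Lemma \ref{multilema}, and then to track when the value $1$ appears as an eigenvalue of $\mathcal{S}_\theta$ as $\theta$ varies. The existence and uniqueness of a negative eigenvalue of $\mathcal{T}$ will be governed by the principal branch $\lambda_0(\theta)$; the identification and simplicity of the $0$ eigenvalue will come from the relation $\mathcal{T}\varphi'=0$ combined with a sign-count argument on the Fourier side.

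The first step is to locate $0\in\sigma(\mathcal{T})$ and compare it with $\lambda_0(0)$. Differentiating the profile equation $(M+\omega)\varphi=\frac{1}{p+1}\varphi^{p+1}$ yields $\mathcal{T}\varphi'=0$. By Lemma \ref{multilema} this is equivalent to $1\in\sigma(\mathcal{S}_0)$ with eigenfunction $\widehat{\varphi'}(\xi)=i\xi\widehat{\varphi}(\xi)$. Because $\widehat{\varphi}>0$ on $\R$, this eigenfunction changes sign exactly once and so cannot be proportional to the strictly positive Perron eigenfunction $\psi_{0,0}$ from Lemma \ref{simplelema}. Hence $\lambda_0(0)\neq 1$, and strict dominance in Lemma \ref{simplelema} forces $\lambda_0(0)>|\lambda_1(0)|\geq 1$.

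The second step produces the negative eigenvalue. Using the Rayleigh-quotient characterization
\[
\lambda_0(\theta)=\sup_{g\in X\setminus\{0\}}\frac{\int_\R\int_\R K(x-y)g(y)\overline{g(x)}\,dy\,dx}{\int_\R |g(x)|^2 w_\theta(x)\,dx},
\]
together with the fact that $w_\theta(x)=m(x)+\omega+\theta$ is strictly increasing in $\theta$, I would show that $\lambda_0$ is continuous and strictly decreasing on $[0,\infty)$; the growth condition (i) on $m$ yields $\lambda_0(\theta)\to 0$ as $\theta\to\infty$. Since $\lambda_0(0)>1$, the intermediate value theorem produces a unique $\theta_0>0$ with $\lambda_0(\theta_0)=1$. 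Reapplying Lemma \ref{simplelema} at the parameter $\theta_0$ (its proof transfers verbatim) gives that $\kappa:=-\theta_0$ is a simple negative eigenvalue of $\mathcal{T}$, establishing (P1).

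To finish I would rule out any other negative eigenvalue of $\mathcal{T}$ and prove simplicity of $0$. If $-\eta$ were another negative eigenvalue, Lemma \ref{multilema} gives $1=\lambda_j(\eta)$ for some $j$; uniqueness of $\theta_0$ excludes $j=0$, so one would need $|\lambda_1(\eta)|\geq 1$. Likewise, the simplicity of $0\in\sigma(\mathcal{T})$ amounts to the simplicity of $1\in\sigma(\mathcal{S}_0)$, where any second eigenfunction would live in the orthogonal complement of $\psi_{0,0}$. Both claims require controlling the non-principal branches $\lambda_j$, and this is where I expect the main obstacle: Lemma \ref{simplelema} governs only the Perron eigenvalue $\lambda_0$, so ruling out $|\lambda_1(\theta)|\geq 1$ for $\theta>0$ and pinning down the multiplicity of $1$ at $\theta=0$ demand the full PF(2)/Karlin oscillation machinery, namely that the $n$-th eigenfunction of a PF(2) integral operator has exactly $n$ sign changes. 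Applied here, this forces any eigenfunction of $\mathcal{S}_\theta$ with eigenvalue $1$ and one sign change to be a scalar multiple of $\widehat{\varphi'}$, which occurs only at $\theta=0$; no branch other than $\lambda_0$ can attain the value $1$ for $\theta>0$, yielding (P2), and simplicity at $\theta=0$ yields (P3). This sign-change analysis is the technical heart of the theorem and the reason the hypothesis $K\in PF(2)$, beyond mere positivity, is needed.
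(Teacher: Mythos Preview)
Your proposal is correct and follows exactly the argument in Albert \cite{al}, which is all the paper's proof does (it simply cites Theorem 3.2 there). The only place your sketch could be sharpened is the final paragraph: the cleanest closure of (P2) and (P3) is to observe that the PF(2) oscillation theory identifies $\widehat{\varphi'}$ (one sign change) precisely as $\psi_{1,0}$, so $\lambda_1(0)=1$ is simple, and then the same Rayleigh/min-max monotonicity you invoked for $\lambda_0$ applies to $\lambda_1$ as well, giving $\lambda_j(\theta)\le\lambda_1(\theta)<\lambda_1(0)=1$ for every $\theta>0$ and $j\ge 1$.
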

\begin{proof}
See Theorem 3.2 in \cite{al}.
\end{proof}

\subsection{Local and global well-posedness}
Before we leave this section let us recall the local and global well-posedness results for \eqref{equisys}, or equivalently, for \eqref{nls}.

\begin{theorem}\label{localteo}
Given $U_0\in \Hh^2$, there exists $t_0 > 0$ and a unique solution
$U\in C([0,t_0];\Hh^2)$ of \eqref{equisys} such that $U(0) = U_0$.
The solution has conserved mass and energy in the sense that
$$
F(U(t)) = F(U_0)\quad \mbox{and} \quad E(U(t)) = E(U_0) \quad t\in
[0, t_0],
$$
where  $F$ is defined in \eqref{mass1} and $E$ is defined in
\eqref{hamiltocons1}. If $t^*$ is the maximal time of existence of
$U$, then either
\begin{itemize}
    \item[(i)] $t^*=\infty$, or
    \item[(ii)] $t^*<\infty$ and $\lim_{t\to t^*}\|u(t)\|_{\Hh^2}=\infty$.
\end{itemize}
Moreover, for any $s<t^*$ the map data-solution $U_0\mapsto U$ is
continuous from $\Hh^2$ to $C([0,s];\Hh^2)$.
\end{theorem}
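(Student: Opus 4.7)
The strategy is a standard Banach fixed-point argument on the integral (Duhamel) form of \eqref{nls}; the equivalence with \eqref{equisys} in $\Hh^2$ is immediate via $u = P + iQ$, so it suffices to work with the complex formulation. Define the linear propagator $\{S(t)\}_{t\in\R}$ associated with the real, even symbol $m(\xi) = -\xi^2 - \xi^4$ by $\widehat{S(t)f}(\xi) = e^{itm(\xi)}\widehat{f}(\xi)$; because the symbol is real, $S(t)$ is a unitary group on every $H^s(\R)$. The mild formulation becomes
\[
u(t) = S(t)u_0 + i\int_0^t S(t-\tau)(|u|^2 u)(\tau)\, d\tau =: \Psi(u)(t).
\]

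The key analytic input is that $H^2(\R)$ is a Banach algebra (since $2>1/2$), which yields the local Lipschitz estimate
\[
\bigl\||u|^2 u - |v|^2 v\bigr\|_{H^2} \leq C\bigl(\|u\|_{H^2}^2 + \|v\|_{H^2}^2\bigr)\|u-v\|_{H^2}.
\]
Combined with the $H^2$-unitarity of $S(t)$, a routine calculation shows that for $R > \|u_0\|_{H^2}$ and $t_0 = t_0(\|u_0\|_{H^2})$ sufficiently small, $\Psi$ is a strict contraction on the closed ball
\[
B_R := \bigl\{u\in C([0,t_0]; H^2) : \|u\|_{C([0,t_0]; H^2)}\leq R\bigr\}.
\]
Banach's fixed-point theorem supplies the unique local solution. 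Uniqueness on the whole existence interval and the continuous dependence of the data-to-solution map $U_0\mapsto U$ both follow from the same Lipschitz estimate via a Gronwall argument applied to $\|u(t)-v(t)\|_{H^2}$. The blow-up alternative is likewise standard: if $\limsup_{t\to t^*}\|u(t)\|_{H^2} < \infty$, then the contraction step size remains uniformly bounded below in a neighborhood of $t^*$, so the solution extends past $t^*$, contradicting maximality.

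For the conservation of $F$ and $E$ at the critical regularity $\Hh^2$, I will use an approximation argument. Choose smooth data $u_0^n \in H^\infty(\R)$ converging to $u_0$ in $H^2$; at higher regularity, iterating the local theory yields smooth solutions $u^n$ for which the formal identities
\[
\frac{d}{dt}F(u^n) = \operatorname{Re}\int \bar{u}^n u_t^n\, dx = 0, \qquad \frac{d}{dt}E(u^n) = 0
\]
are rigorously justified via integration by parts in \eqref{nls}. Passing to the limit using the continuous dependence established above transfers $F(u(t)) = F(u_0)$ and $E(u(t)) = E(u_0)$ to the limiting $H^2$-solution. The main technical nuisance is precisely this final step --- ensuring that the approximation is compatible with continuous dependence so the conservation identities survive in the limit --- since the local theory itself is comparatively soft, essentially a by-product of the one-dimensional algebra structure of $H^2$.
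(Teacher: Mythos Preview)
Your argument is correct: the unitarity of the fourth-order group together with the Banach-algebra property of $H^2(\R)$ makes the contraction mapping, blow-up alternative, continuous dependence, and density argument for the conservation laws all go through exactly as you outline. The paper does not supply its own proof of this theorem at all; it simply refers to Proposition~4.1 of Pausader~\cite{pa}, so there is no ``paper's approach'' to compare against beyond noting that your self-contained argument is the standard one for a subcritical semilinear dispersive equation and is precisely what underlies the cited result in this one-dimensional setting.
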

\begin{proof}
See Proposition 4.1 in \cite{pa}.
\end{proof}

The conservation of the energy and the Gagliardo-Nirenberg inequality imply.

\begin{corollary}\label{globalteo}
Given $U_0\in \Hh^2$, the solution $u$ obtained in Theorem \ref{localteo} can
be extended to the whole real line.
\end{corollary}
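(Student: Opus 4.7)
The plan is to invoke the blow-up alternative provided by Theorem \ref{localteo}: since option (ii) is the only obstruction to global existence, it suffices to establish an a priori bound on $\|U(t)\|_{\Hh^2}$ on every bounded subinterval of the maximal existence interval $[0,t^*)$. A symmetric argument backward in time then produces a solution on the whole of $\R$.

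The a priori bound will be obtained by combining the two conservation laws with a one-dimensional Gagliardo-Nirenberg interpolation inequality, in the classical style. Conservation of mass \eqref{mass1} immediately gives $\|U(t)\|_{\Ll^2}^2 = 4F(U_0)$, so the $\Ll^2$-part of the $\Hh^2$-norm is trivially controlled. Rearranging conservation of energy \eqref{hamiltocons1}, one obtains
$$
\|U_{xx}(t)\|_{\Ll^2}^2 + \|U_x(t)\|_{\Ll^2}^2 \,=\, 2E(U_0) + \tfrac{1}{2}\int_\R (P^2+Q^2)^2\,dx,
$$
so the task reduces to controlling the quartic term on the right by the quadratic quantities on the left.

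For this, I will apply the one-dimensional Gagliardo-Nirenberg inequality $\|u\|_{L^4}^4 \leq C\|u\|_{L^2}^3\|u_x\|_{L^2}$ to the complex-valued function $u = P+iQ$, whose $L^2$ and $\dot H^1$ norms coincide with those of $U$ in $\Ll^2$ and $\Hh^1$. This gives
$$
\tfrac{1}{2}\int_\R (P^2+Q^2)^2\,dx \,\leq\, C\|U\|_{\Ll^2}^3\|U_x\|_{\Ll^2} \,\leq\, C(U_0)\,\|U_x\|_{\Ll^2},
$$
where the last inequality uses mass conservation. A standard Young-type inequality $ab \leq \varepsilon a^2 + C_\varepsilon$ with $\varepsilon$ sufficiently small then absorbs $\|U_x\|_{\Ll^2}$ into the quadratic left-hand side of the energy identity, yielding
$$
\|U_{xx}(t)\|_{\Ll^2}^2 + \tfrac{1}{2}\|U_x(t)\|_{\Ll^2}^2 \,\leq\, 2E(U_0) + C(U_0).
$$

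Combining this with the $\Ll^2$ bound from mass conservation gives $\|U(t)\|_{\Hh^2} \leq C(U_0)$ uniformly on $[0,t^*)$, which rules out alternative (ii) of Theorem \ref{localteo} and forces $t^*=+\infty$. There is essentially no obstacle here: the absorption argument succeeds precisely because the cubic nonlinearity is $H^2$-subcritical in one space dimension, so that $\|U_x\|_{\Ll^2}$ appears in the Gagliardo-Nirenberg estimate with power strictly less than $2$; all constants depend only on $E(U_0)$ and $F(U_0)$, hence only on the initial datum.
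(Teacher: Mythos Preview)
Your argument is correct and is precisely the approach the paper indicates (``the conservation of the energy and the Gagliardo-Nirenberg inequality imply''), which it then defers to \cite{pa}; you have simply supplied the details. One harmless slip: mass conservation gives $\|U(t)\|_{\Ll^2}^2 = 2F(U_0)$, not $4F(U_0)$.
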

\begin{proof}
See Corollary 4.1 in \cite{pa}.
\end{proof}

\section{Spectral Analysis} \label{sepecsec}

In this section we use the total positivity theory to get the spectral properties we need to follow. Since $\mathcal{L}$ is a diagonal operator, its eigenvalues are given by the eigenvalues of the operators $\Lum$ and $\Ldois$. Thus, roughly speaking, it suffices to know the spectrum of $\Lum$ and $\Ldois$.

\subsection{The spectrum of $\mathcal{L}_1$}
Attention will be turned to the spectrum of the operator $\Lum$.
Let $\phi$ be the standing wave given in \eqref{solitary} and define
$$
\varphi=\sqrt3\phi.
$$
The operator $\Lum$ then reads as
\begin{equation}\label{L1m}
\mathcal{L}_1= \partial_x^4-\partial_x^2+\alpha-\varphi^2.
\end{equation}

We now can prove the following.

\begin{proposition}\label{specL1}
The operator $\Lum$ in \eqref{L1m} defined on $L^2(\R)$ with domain $H^4(\R)$
has a unique negative eigenvalue, which is simple with positive associated eigenfunction. The eigenvalue zero is
simple with associated eigenfunction $\phi'$. Moreover the rest of the
spectrum is bounded away from zero and the essential spectrum is the interval
$[\alpha,\infty)$.
\end{proposition}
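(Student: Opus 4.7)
My plan is to apply the total positivity framework from Section \ref{reviewsec} to $\mathcal{L}_1$. First I would cast $\mathcal{L}_1$ in the form of the operator $\mathcal{T}$ from \eqref{opT}: take $M = \partial_x^4 - \partial_x^2$, so that $m(\xi) = \xi^4 + \xi^2$, choose $\omega = \alpha = 4/25$, $p = 2$, and note that with $\varphi = \sqrt{3}\phi$ equation \eqref{soleq} rewrites as
\begin{equation*}
(M+\alpha)\varphi = \tfrac{1}{3}\varphi^3 = \tfrac{1}{p+1}\varphi^{p+1},
\end{equation*}
so $\mathcal{L}_1 = M + \omega - \varphi^p$. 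Since $m(\xi)$ is even, nonnegative, locally bounded, and satisfies $|\xi|^4 \lesssim m(\xi) \lesssim |\xi|^4$ for $|\xi|\geq 1$, Lemma \ref{gelema} applies and gives at once that $\mathcal{L}_1$ is self-adjoint on $L^2(\R)$ with domain $H^4(\R)$, that the essential spectrum equals $[\alpha,\infty)$, that the part of the spectrum in $(-\infty,\alpha)$ consists of finitely many eigenvalues of finite multiplicity, and that $\varphi' = \sqrt{3}\phi'$ (equivalently $\phi'$) is an eigenfunction for the eigenvalue $0$.

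Next I would apply Theorem \ref{mainal} to upgrade these facts to properties (P1)--(P3): a unique simple negative eigenvalue $\kappa$, no other negative eigenvalues, and simplicity of $0$. This requires the two hypotheses $\widehat{\varphi}>0$ on $\R$ and $K := \widehat{\varphi^2}\in PF(2)$. Using the explicit form $\phi(x) = \sqrt{3/10}\,\sech^2(x/\sqrt{20})$ together with the classical formula
\begin{equation*}
\widehat{\sech^2(bx)}(\xi) = \frac{\pi}{b^2}\cdot\frac{\xi}{\sinh(\pi\xi/(2b))},
\end{equation*}
(with the removable singularity at $\xi=0$), the positivity of $\widehat\varphi$ follows immediately. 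The positivity of the eigenfunction associated with $\kappa$, promised in the statement, will then be furnished by Lemma \ref{simplelema} combined with Lemma \ref{multilema}: indeed, $-\kappa$ corresponds to the dominant eigenvalue $\lambda_0(-\kappa)=1$ of $\mathcal{S}_{-\kappa}$ whose eigenfunction is the Fourier transform of a strictly positive function.

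The main obstacle is the verification that $K\in PF(2)$. Here I would invoke Lemma \ref{logcon} and reduce the task to showing $(\log K(\xi))'' < 0$ for $\xi\neq 0$. A residue computation, or equivalently applying $\partial_x^2-\partial_x$ to $\widehat{\sech^2}$, gives
\begin{equation*}
\widehat{\sech^4(bx)}(\xi) = \frac{\pi}{3b^4}\cdot\frac{\xi(4b^2+\xi^2)}{\sinh(\pi\xi/(2b))},
\end{equation*}
and since $\varphi^2 = (9/10)\sech^4(x/\sqrt{20})$, log-concavity for $K$ reduces to the one-variable inequality
\begin{equation*}
\left(\log|\xi| + \log(4b^2+\xi^2) - \log\sinh(\pi\xi/(2b))\right)'' < 0, \qquad \xi\neq 0.
\end{equation*}
The negative contribution from $-\log\sinh$ dominates the mildly positive contributions from $\log|\xi|$ and $\log(4b^2+\xi^2)$, and checking this carefully is the delicate step. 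Once (P1)--(P3) are in hand, the claim about the rest of the spectrum being bounded away from zero is automatic: the discreteness of the spectrum in $(-\infty,\alpha)$, the simplicity of $0$, and the fact that the essential spectrum starts at $\alpha>0$ together force a positive spectral gap around $0$, completing the proof.
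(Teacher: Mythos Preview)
Your overall strategy coincides with the paper's: cast $\mathcal{L}_1$ in the form \eqref{opT} with $m(\xi)=\xi^4+\xi^2$, invoke Lemma~\ref{gelema} for the self-adjointness, essential spectrum, and the fact that $\phi'$ lies in the kernel, and then apply Theorem~\ref{mainal} once $\widehat{\varphi}>0$ and $\widehat{\varphi^2}\in PF(2)$ are checked. The positivity of $\widehat{\varphi}$ is handled identically.

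The one substantive difference is in the verification of $K=\widehat{\varphi^2}\in PF(2)$. The paper does \emph{not} compute $\widehat{\sech^4}$ at all; instead it observes that $\widehat{\varphi^2}=\widehat{\varphi}\ast\widehat{\varphi}$ is a convolution of (rescaled copies of) $\rho(\xi)=\pi\xi/\sinh(\pi\xi/2)$, that $\rho$ is log-concave (already shown in \cite{al}), and that the convolution of log-concave functions is log-concave by Pr\'ekopa/Brascamp--Lieb \cite{bl}. This closes the PF(2) step in one line and bypasses any delicate one-variable inequality.

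Your direct route---computing $\widehat{\sech^4}$ explicitly and checking $(\log K)''<0$ by hand---can in principle succeed, but your heuristic for why it should work has the signs reversed. For $\xi>0$ one has
\[
(\log|\xi|)''=-\frac{1}{\xi^2}<0,\qquad
\bigl(-\log\sinh(a\xi)\bigr)''=\frac{a^2}{\sinh^2(a\xi)}>0,
\]
so it is $\log|\xi|$ that contributes negatively and $-\log\sinh$ that contributes positively, exactly opposite to what you wrote. The actual inequality one must verify is therefore that $-1/\xi^2$ dominates $a^2/\sinh^2(a\xi)$ together with the (sign-changing) term $(8b^2-2\xi^2)/(4b^2+\xi^2)^2$; this is true but is precisely the ``delicate step'' you flagged, and your intuition about which terms help and which hurt would lead you astray in carrying it out. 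The paper's convolution argument avoids this computation entirely and is the cleaner option.

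A minor remark: your argument for positivity of the eigenfunction at the negative eigenvalue appeals to Lemma~\ref{simplelema}, which as stated concerns $\mathcal{S}_0$ rather than $\mathcal{S}_{-\kappa}$, and positivity of $\widehat g$ does not by itself give positivity of $g$. The paper simply cites \cite[Proposition~2]{abh} for this point; if you want to argue it yourself, you need the Perron--Frobenius reasoning carried out in physical space rather than in Fourier variables.
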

\begin{proof}
First of all  note that, from \eqref{soleq}, $\varphi'$ is an eigenfunction of  $\Lum$ associated with the eigenvalue zero. Also, from Lemma \ref{gelema}, the essential spectrum is exactly $[\alpha,\infty)$. Now observe that $\Lum$ is an operator of the form \eqref{opT} with $m(\xi)=\xi^4+\xi^2\geq0$. It is easy to see that
$$
|\xi|^4\leq m(\xi)\leq 2|\xi|^4, \qquad |\xi|\geq1.
$$
Hence, the operator $\Lum$ satisfies the assumptions in Subsection
\ref{reviewsec} with $A_1=1$, $A_2=2$, $\mu=4$, and $\xi_0=1$. In view
of Lemma \ref{gelema} and Theorem \ref{mainal} it suffices to prove that
$\widehat{\varphi}>0$ and $\widehat{\varphi^2}\in PF(2)$. Since
$$
(\sech^2(\cdot))^\wedge(\xi)=\frac{\pi \xi}{\sinh(\frac{\pi
\xi}{2})}=:\rho(\xi)>0,
$$
and $\varphi=\sqrt3\phi$, it is clear that $\widehat{\varphi}>0$. On the
other hand, by the properties of the Fourier transform, $\widehat{\varphi^2}$ is the convolution of a function similar to
$\rho$ with itself. Since $\rho$ is logarithmically concave (see proof of Theorem 4.6 in \cite{al}) and the
convolution of logarithmically concave functions also is logarithmically
concave (\cite[Section 4]{bl}), we obtain that $\widehat{\varphi^2}\in PF(2)$. The fact that the eigenfunction associated with the negative eigenvalue can be taken to be positive follows as in \cite[Proposition 2]{abh}.
\end{proof}

\subsection{The spectrum of $\Ldois$}

As before, let $\phi$ be the solitary wave given in
\eqref{solitary}. Observe that $\Ldois$ is of the form \eqref{opT},
but $\phi'$ is not an eigenfunction at all. Thus the theory
developed in the last paragraphs cannot be applied directly to
$\Ldois$. On the other hand, to $\Ldois$ we can still associate  a
family of operators $\mathcal{S}_{\theta}$, $\theta\geq0$, as in Subsection \ref{reviewsec}.  A simple
inspection in the proofs of \cite{al} reveals that Lemmas
\ref{multilema} and \ref{simplelema} above remain true here.

As a consequence, concerning the operator $\Ldois$ we have the following spectral properties.

\begin{proposition}\label{specL2}
The operator $\Ldois$ in \eqref{L2} defined on $L^2(\R)$ with domain
$H^4(\R)$ has no negative eigenvalue. The eigenvalue zero is simple with
associated eigenfunction $\phi$. Moreover the rest of the spectrum is bounded
away from zero and the essential spectrum is the interval $[\alpha,\infty)$.
\end{proposition}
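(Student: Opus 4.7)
The plan is to exploit the fact that, by \eqref{soleq}, $\phi$ itself satisfies
$$
\Ldois\phi=\phi''''-\phi''+\alpha\phi-\phi^3=0,
$$
so $\phi$ is an eigenfunction of $\Ldois$ with eigenvalue $0$. Since $\phi>0$ on $\R$, a Krein--Rutman-type argument---already encoded in Lemma \ref{simplelema}---will force $0$ to be the bottom of the point spectrum and to be simple. The essential-spectrum assertion comes directly from Lemma \ref{gelema}, and the ``bounded away from zero'' statement will follow from the finite-multiplicity part of the same lemma once $0$ is known to be simple and the only non-positive eigenvalue.

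Concretely, I would associate with $\Ldois$ the family $\{\mathcal{S}_\theta\}_{\theta\geq0}$ of Subsection \ref{reviewsec}, taking $\omega=\alpha$, kernel $K=\widehat{\phi^2}$, and weight $w_\theta(\xi)=\xi^4+\xi^2+\theta+\alpha$. As the authors point out, Lemmas \ref{multilema} and \ref{simplelema} remain valid here; the PF(2) character of $K$ is inherited from the computation carried out in the proof of Proposition \ref{specL1} (since $\widehat{\phi^2}$ differs from $\widehat{\varphi^2}$ only by a positive constant). Applying Lemma \ref{multilema} at $\theta=0$, the identity $\Ldois\phi=0$ translates into $\mathcal{S}_0\widehat\phi=\widehat\phi$, so $1$ is an eigenvalue of $\mathcal{S}_0$. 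From $(\sech^2(\cdot))^\wedge(\xi)=\pi\xi/\sinh(\pi\xi/2)>0$ we have $\widehat\phi>0$ on $\R$. Lemma \ref{simplelema} provides a simple top eigenvalue $\lambda_0(0)$ with strictly positive eigenfunction $\psi_{0,0}$ satisfying $\lambda_0(0)>|\lambda_i(0)|$ for all $i\geq 1$. Since two strictly positive functions cannot be orthogonal in $X$, the eigenfunction $\widehat\phi$ must belong to the $\lambda_0(0)$-eigenspace, forcing $\lambda_0(0)=1$ and $\widehat\phi$ to span this eigenspace. Lemma \ref{multilema} then yields that $0$ is a simple eigenvalue of $\Ldois$ with eigenfunction $\phi$.

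To rule out negative eigenvalues I would establish the strict monotonicity $\lambda_0(\theta)<\lambda_0(0)$ for $\theta>0$. A direct computation shows that
$$
(\mathcal{S}_\theta g,g)_{X,\theta}=\int_\R\int_\R K(x-y)\,g(y)\,\overline{g(x)}\,dx\,dy
$$
does not depend on $\theta$, whereas $\|g\|_{X,\theta}^2=\int_\R|g|^2 w_\theta\,d\xi$ is strictly increasing in $\theta$ because of the pointwise strict inequality $w_\theta>w_0$. Evaluating the Rayleigh quotient of $\mathcal{S}_\theta$ at a positive maximizer gives $\lambda_0(\theta)<\lambda_0(0)=1$ as long as $\lambda_0(\theta)>0$; in any case $|\lambda_i(\theta)|\leq\lambda_0(\theta)$ for every $i$, so $1\notin\sigma(\mathcal{S}_\theta)$ when $\theta>0$, and by Lemma \ref{multilema} no $-\theta<0$ is an eigenvalue of $\Ldois$. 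The remaining claim is automatic: by Lemma \ref{gelema} the set of discrete eigenvalues of $\Ldois$ in $(-\infty,\alpha]$ is finite, and it equals $\{0\}$ together with a (possibly empty) finite subset of $(0,\alpha)$, which is therefore bounded away from $0$.

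The only delicate step is the strict inequality $\lambda_0(\theta)<\lambda_0(0)$: the pointwise strict inequality $w_\theta>w_0$ gives strict monotonicity of the Rayleigh quotient on each nonzero $g$, and one must transfer this strictness to the supremum via the existence of a nontrivial maximizer, which is guaranteed by compactness of $\mathcal{S}_\theta$ on $X$.
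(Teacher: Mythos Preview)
Your argument is correct and follows essentially the same route as the paper: you identify $\phi$ as a null vector of $\Ldois$, pass to $\mathcal{S}_0$ via Lemma \ref{multilema}, use positivity of $\widehat\phi$ together with Lemma \ref{simplelema} to force $\lambda_0(0)=1$ (hence simplicity of $0$), and then rule out negative eigenvalues by showing $\lambda_0(\theta)<1$ for $\theta>0$. The only cosmetic differences are that the paper simply cites \cite[p.~9]{al} for the strict decrease of $\theta\mapsto\lambda_0(\theta)$ whereas you sketch the Rayleigh-quotient argument directly, and that your remark on the PF(2) property of $\widehat{\phi^2}$ is unnecessary here (it is needed for Theorem \ref{mainal}, not for Lemmas \ref{multilema}--\ref{simplelema}).
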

\begin{proof}
Although this result was not stated, it is  reminiscent of the theory in
\cite{al}. The fact that the essential spectrum is $[\alpha,\infty)$ can be proved as in \cite[Proposition 1]{abh}. In view of \eqref{soleq} it is clear that zero is an eigenvalue with
eigenfunction $\phi$. Thus, Lemma \ref{multilema} implies that $1$ is an
eigenvalue of $\mathcal{S}_{0}$ with eigenfunction $\widehat{\phi}$.\\

\noindent {\bf Claim.} If  $\lambda_0(0)$ is the first eigenvalue of $\mathcal{S}_{0}$, we have  $\lambda_0(0)=1$.\\

Indeed, assume by contradiction that $\lambda_0(0)\neq1$ and let $\psi_{0,0}$
be the associated eigenfunction. By Lemma \ref{simplelema} we have
$\psi_{0,0}>0$. Since $\widehat{\phi}>0$ (see proof of Proposition \ref{specL1}), the scalar product in $L^2(\R)$ between $\psi_{0,0}$ and
$\widehat{\phi}$ is then strictly positive, which contradicts the fact that
the eigenfunctions are orthogonal.\\

The above claim together with Lemma \ref{simplelema} imply that 1 is
a simple eigenvalue of $\mathcal{S}_{0}$. Therefore, it follows from
Lemma \ref{multilema} that zero is a simple eigenvalue of $\Ldois$.

It remains to show that $\Ldois$ has no negative eigenvalues. To do
so, it is sufficient, from Lemma \ref{multilema}, to prove that 1 is
not an eigenvalue of $\mathcal{S}_{\theta}$ for any $\theta>0$. We
already know that
$$
\lim_{\theta\to\infty}\lambda_0(\theta)=0
$$
and $\theta\in[0,\infty)\mapsto\lambda_0(\theta)$ is a strictly decreasing
function (see \cite[page 9]{al}). Thus, for $\theta>0$ and $i\geq1$,
$$
|\lambda_i(\theta)|\leq \lambda_0(\theta)<\lambda_0(0)=1.
$$
 This obviously implies that 1 cannot be an eigenvalue of $\mathcal{S}_{\theta}$,
 $\theta>0$, and the proof of the proposition is completed.
\end{proof}

\subsection{The spectrum of $\mathcal{L}$}

We finish this section by stating the spectral properties of the ``linearized'' operator $\mathcal{L}$. Indeed, a combination of Propositions \ref{specL1} and \ref{specL2} gives us the following.

\begin{theorem}
The operator $\mathcal{L}$ in \eqref{L} defined on $L^2(\R)\times L^2(\R)$
with domain $H^4(\R)\times H^4(\R)$ has a unique negative eigenvalue, which
is simple. The eigenvalue zero is double with associated eigenfunctions
$(\phi',0)$ and $(0,\phi)$. Moreover the essential spectrum is the interval
$[\alpha,\infty)$.
\end{theorem}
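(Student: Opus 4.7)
The plan is to exploit the block-diagonal form of $\mathcal{L}$ in \eqref{L} and combine the two previous propositions in a componentwise fashion. Since $\mathcal{L}$ acts on pairs $(f,g)$ by $\mathcal{L}(f,g)=(\mathcal{L}_1 f,\mathcal{L}_2 g)$, for any $\lambda\in\mathbb{R}$ the eigenvalue equation $\mathcal{L}(f,g)=\lambda(f,g)$ decouples into $\mathcal{L}_1 f=\lambda f$ and $\mathcal{L}_2 g=\lambda g$. Consequently, the point spectrum of $\mathcal{L}$ is the union $\sigma_p(\mathcal{L}_1)\cup\sigma_p(\mathcal{L}_2)$, and for each eigenvalue $\lambda$ the geometric multiplicity in $\mathcal{L}$ is the sum of its multiplicities as an eigenvalue of $\mathcal{L}_1$ and $\mathcal{L}_2$ (with multiplicity zero when it is not an eigenvalue of the corresponding diagonal entry). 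An analogous union formula holds for the essential spectrum.

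With this observation, the three claims of the theorem follow immediately. First, by Proposition \ref{specL1}, $\mathcal{L}_1$ has a unique simple negative eigenvalue, while by Proposition \ref{specL2}, $\mathcal{L}_2$ has none; hence $\mathcal{L}$ has exactly one negative eigenvalue, which is simple, and its eigenvector has the form $(\chi,0)$, where $\chi$ is the (positive) ground state of $\mathcal{L}_1$. Second, zero is a simple eigenvalue of $\mathcal{L}_1$ with eigenfunction $\phi'$ and a simple eigenvalue of $\mathcal{L}_2$ with eigenfunction $\phi$; by the multiplicity formula above, zero is a double eigenvalue of $\mathcal{L}$ with the linearly independent eigenfunctions $(\phi',0)$ and $(0,\phi)$. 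Third, since both diagonal entries have essential spectrum equal to $[\alpha,\infty)$, so does $\mathcal{L}$.

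There is essentially no obstacle to carrying this out: the only thing that requires a line of justification is the decoupling statement for the spectrum of a block-diagonal self-adjoint operator, which is standard and can be invoked directly. The substantive work has already been done in Propositions \ref{specL1} and \ref{specL2}, so the final proof is a short bookkeeping argument combining those two results.
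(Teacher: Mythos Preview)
Your proposal is correct and matches the paper's approach exactly: the paper simply states that the theorem follows from ``a combination of Propositions \ref{specL1} and \ref{specL2}'' without further detail, and your argument spells out precisely that combination via the block-diagonal structure of $\mathcal{L}$. If anything, you have written out more carefully than the paper does the routine fact that the (point and essential) spectrum of a direct sum is the union of the spectra of the summands, with multiplicities adding.
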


\section{Orbital stability}\label{stasec}

Let us start this section by making clear our notion of orbital stability.  Taking into account that \eqref{hamiltonian} is invariant by the transformations \eqref{l0} and \eqref{l00}, we define the orbit generated by $\Phi$ (see \eqref{Phi}) as
\begin{equation}\label{l1}
\begin{split}
\Omega_\Phi&=\{T_1(\theta) T_2(r)\Phi;\;\;\theta,r\in\R\}\\
&\simeq \left\{ \left(\begin{array}{cc}
 \cos\theta & \sin\theta\\
-\sin\theta & \cos\theta
\end{array}\right)\left(\begin{array}{c}
 \phi(\cdot-r)\\
 0
\end{array}\right);\;\;\theta,r\in\R  \right\}.
\end{split}
\end{equation}
In $\Hh^2$, we introduce the pseudo-metric $d$ by
$$
d(f,g):=\inf\{\|f-T_1(\theta)
T_2(r)g\|_{\Hh^2};\;\theta,r\in\R\}.
$$
Note  that, by definition, the distance between $f$ and $g$ is the distance between $f$ and the orbit generated by  $g$ under the action of rotations and translations. In particular,
\begin{equation}\label{l2}
d(f,\Phi)=d(f,\Omega_\Phi).
\end{equation}

\begin{definition}\label{stadef}
Let $\Theta(x,t)=(\phi(x)\cos(\alpha t), \phi(x)\sin(\alpha t))$ be a standing wave for \eqref{hamiltonian}. We say that $\Theta$ is orbitally stable in $\Hh^2$ provided that, given $\ve>0$, there exists $\delta>0$ with the following property: if $U_0\in \Hh^2$ satisfies $\|U_0-\Phi\|_{\Hh^2}<\delta$, then the solution, $U(t)$, of \eqref{hamiltonian} with initial condition $U_0$ exist for all $t\geq0$ and satisfies
$$
d(U(t),\Omega_\Phi)<\ve, \qquad \mbox{for all}\,\, t\geq0.
$$
Otherwise, we say that $\Theta$ is orbitally unstable in $\Hh^2$.
\end{definition}

\subsection{Positivity of the operator $G''(\Phi)$}

Before proving our main result we need some positivity properties of the operators appearing in \eqref{L}-\eqref{L2}. To do so, we utilize the spectral properties established in Section \ref{sepecsec}.

\begin{lemma}\label{bouL2}
There exists $\delta_2>0$ such that
\begin{equation}\label{a1}
(\Ldois Q,Q)_{L^2}\geq\delta_2\|Q\|_{L^2}^2,
\end{equation}
for all $Q\in H^4$ satisfying $(Q,\phi)_{L^2}=0$.
\end{lemma}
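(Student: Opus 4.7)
The plan is to deduce this coercivity estimate directly from the spectral description of $\Ldois$ obtained in Proposition \ref{specL2}, via the spectral theorem for unbounded self-adjoint operators.

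First I would collect the relevant spectral facts: $\Ldois$ is self-adjoint on $L^2(\R)$ with domain $H^4(\R)$; its essential spectrum is $[\alpha,\infty)$; it has no negative eigenvalues; zero is a simple eigenvalue with eigenfunction $\phi$; and all other spectrum is bounded away from zero. Consequently, the set $\sigma(\Ldois)\setminus\{0\}$ is a closed subset of $(0,\infty)$ that is bounded below by a strictly positive number. Define
\[
\delta_2:=\inf\bigl(\sigma(\Ldois)\setminus\{0\}\bigr)>0.
\]

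Next I would exploit self-adjointness to see that $\{\phi\}^{\perp}$ is an invariant reducing subspace: because $\phi\in\ker\Ldois$ and the zero-eigenspace is exactly one-dimensional (spanned by $\phi$), the orthogonal projection onto $\{\phi\}^{\perp}$ commutes with $\Ldois$. Thus the restriction $\Ldois\big|_{\{\phi\}^{\perp}}$ is a self-adjoint operator whose spectrum equals $\sigma(\Ldois)\setminus\{0\}\subset[\delta_2,\infty)$.

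Finally, I would apply the functional calculus. Writing $\Ldois=\int \lambda\,dE(\lambda)$ for the spectral resolution $E$, the hypothesis $(Q,\phi)_{L^2}=0$ together with the simplicity of the zero eigenvalue gives $E(\{0\})Q=0$, so for $Q\in H^4$ with $(Q,\phi)_{L^2}=0$,
\[
(\Ldois Q,Q)_{L^2}=\int_{\sigma(\Ldois)\setminus\{0\}}\lambda\,d(E(\lambda)Q,Q)_{L^2}\geq \delta_2\int d(E(\lambda)Q,Q)_{L^2}=\delta_2\|Q\|_{L^2}^{2},
\]
which is \eqref{a1}. Equivalently, one could invoke the min-max (Courant--Fischer) characterization on the orthogonal complement of the kernel.

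There is no real obstacle here: all the hard work was done in establishing Proposition \ref{specL2}. The only subtle point worth being explicit about is the simplicity of the zero eigenvalue, which is what guarantees that the constraint $(Q,\phi)_{L^2}=0$ truly kills the entire zero-eigenspace, so that the spectral estimate on $\{\phi\}^{\perp}$ transfers to the full inner product $(\Ldois Q,Q)_{L^2}$.
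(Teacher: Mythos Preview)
Your proof is correct and follows essentially the same approach as the paper. The paper decomposes $L^2=[\phi]\oplus M$, invokes Kato's theorem to identify $\sigma(\Ldois|_M)=\sigma(\Ldois)\setminus\{0\}$, and then uses the lower bound from Proposition \ref{specL2}; your spectral-resolution argument is just a more explicit rendering of the same reduction.
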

\begin{proof}
Write $L^2=[\phi]\oplus M$ where $\phi\perp Q$ for all $Q\in M$. Since $\phi$ belongs to the kernel of
$\Ldois$ it follows from Theorem 6.17 in \cite{kato} that the spectrum of the
part $\Ldois{\mid}_M$ coincides with $\sigma(\Ldois)\setminus\{0\}$.
Proposition \ref{specL2} and the arguments in \cite[page 278]{kato} imply
that there exists a $\delta_2>0$ such that $\Ldois\geq\delta_2$ on $M\cap
H^4$. Now, if $Q\in H^4$ satisfies $(Q,\phi)_{L^2}=0$ we have $Q\in M$ and
the conclusion of the lemma then follows.
\end{proof}

\begin{lemma}\label{bouL1}
Let $\gamma$ be defined as
$$
\gamma=\inf\{(\Lum P,P)_{L^2}; \;P\in H^4, \|P\|_{L^2}=1,(P,\phi)_{L^2}=0\}.
$$
Then $\gamma=0$.
\end{lemma}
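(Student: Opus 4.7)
The plan is to show the two inequalities $\gamma\leq 0$ and $\gamma\geq 0$ separately. The upper bound is immediate: by Proposition \ref{specL1} the function $\phi'\in H^4$ lies in the kernel of $\mathcal{L}_1$, and $(\phi',\phi)_{L^2}=\tfrac{1}{2}\int_\R(\phi^2)'\,dx=0$ by the decay of $\phi$ at infinity. Hence $P:=\phi'/\|\phi'\|_{L^2}$ is admissible in the infimum and yields $(\mathcal{L}_1 P,P)_{L^2}=0$, so $\gamma\leq 0$.

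For $\gamma\geq 0$ I would argue by contradiction, assuming $\gamma<0$. The quadratic form satisfies the coercivity estimate $(\mathcal{L}_1 P,P)_{L^2}\geq\|P\|_{H^2}^2-C\|P\|_{L^2}^2$ with $C=3\|\phi\|_\infty^2+\alpha$, so any minimizing sequence is bounded in $H^2$. Passing to a weakly convergent subsequence and exploiting the compactness of $P\mapsto\int_\R\phi^2 P^2\,dx$ (which follows from the exponential decay of $\phi$), one obtains a minimizer $P^*\in H^4$ with $\|P^*\|_{L^2}=1$, $(P^*,\phi)_{L^2}=0$, and $(\mathcal{L}_1 P^*,P^*)_{L^2}=\gamma$. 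The Lagrange-multiplier identity takes the form $\mathcal{L}_1 P^*=\gamma P^*+\beta\phi$ for some $\beta\in\R$. When $\beta=0$, $P^*$ is an eigenfunction of $\mathcal{L}_1$ corresponding to the eigenvalue $\gamma<0$; by Proposition \ref{specL1} this eigenvalue must be the unique negative one $-\lambda$ and $P^*=\pm\chi/\|\chi\|_{L^2}$, where $\chi>0$. Since $\phi>0$ as well, $(\chi,\phi)_{L^2}>0$, contradicting $(P^*,\phi)_{L^2}=0$.

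The remaining case $\beta\neq 0$ is the main obstacle. In this situation one first rules out $\gamma=-\lambda$: the Fredholm alternative applied to $\mathcal{L}_1+\lambda$ would then force $\beta(\phi,\chi)_{L^2}=0$, which is impossible. Thus $\mathcal{L}_1-\gamma$ is boundedly invertible on $L^2$, and writing $P^*=\beta(\mathcal{L}_1-\gamma)^{-1}\phi$ and inserting into the orthogonality constraint produces the scalar equation
\begin{equation*}
h(\gamma):=((\mathcal{L}_1-\gamma)^{-1}\phi,\phi)_{L^2}=0.
\end{equation*}
A spectral-theorem computation shows that $h$ is smooth and strictly increasing on $(-\lambda,0)$, with $h(\gamma)\to -\infty$ as $\gamma\to-\lambda^+$ and a finite limit $h(0)=(\mathcal{L}_1^{-1}\phi,\phi)_{L^2}$ as $\gamma\to 0^-$ (well defined because $(\phi,\phi')_{L^2}=0$ permits inversion on the kernel complement). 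Consequently $h$ vanishes somewhere in $(-\lambda,0)$ if and only if $h(0)>0$, and the whole argument reduces to the slope-type condition $h(0)\leq 0$. Since no smooth curve of standing waves is available (as emphasized after \eqref{solitary}), one cannot deduce $h(0)\leq 0$ by differentiating a conserved quantity; instead I would exploit the explicit $\sech^2$-profile of $\phi$ together with the identity $\mathcal{L}_1\phi=-2\phi^3$ to construct an explicit even $\psi\in H^4$ solving $\mathcal{L}_1\psi=\phi$ and then check $(\psi,\phi)_{L^2}\leq 0$ by direct computation. This last verification is the technically delicate step and the main difficulty in the proof.
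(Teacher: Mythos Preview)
Your overall architecture coincides with the paper's: both reduce the lower bound $\gamma\geq 0$ to the single scalar condition
\[
I:=(\mathcal{L}_1^{-1}\phi,\phi)_{L^2}\leq 0,
\]
where $\mathcal{L}_1^{-1}\phi$ is taken in $(\ker\mathcal{L}_1)^\perp=[\phi']^\perp$. The paper packages the reduction as Weinstein's lemma (Lemma \ref{welemma}) and spends most of its effort showing that the constrained infimum is achieved; you unpack that same lemma by hand via the resolvent function $h(\gamma)=((\mathcal{L}_1-\gamma)^{-1}\phi,\phi)_{L^2}$, its monotonicity, and its blow-up at the negative eigenvalue. Both routes are standard and equivalent; your version is slightly more self-contained but not genuinely different.

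The real divergence is in how $I\leq 0$ is established. The paper does not attempt a direct ODE construction; instead it cites Albert's computation (see Remark \ref{remmm}), which builds $\chi$ with $\mathcal{L}_1\chi=\phi$ on the Fourier side using the spectral decomposition of the compact operator $\mathcal{S}_0$, and then evaluates $I$ as an explicit series in Gegenbauer-polynomial coefficients. For the present operator ($n=2$, $p=2$ in Albert's parametrization) this series is checked numerically to be negative. Your plan, by contrast, is to solve $\mathcal{L}_1\psi=\phi$ explicitly as a fourth-order ODE with potential $3\phi^2=\tfrac{9}{10}\sech^4(\cdot/\sqrt{20})$ and then integrate $(\psi,\phi)_{L^2}$ directly. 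This equation does not appear to admit a closed-form $H^4$ solution in elementary functions, and the identity $\mathcal{L}_1\phi=-2\phi^3$ you mention does not by itself produce a preimage of $\phi$. So while your reduction is correct, the step you flag as ``technically delicate'' is not just delicate but, as stated, incomplete: absent an explicit $\psi$, you will need either Albert's Fourier/Gegenbauer machinery or some other spectral argument to close the proof. That is precisely the device the paper imports.
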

\begin{proof}
By defining
\begin{equation}\label{defa0}
a_0:=\inf\{\langle \Lum P,P\rangle; \;P\in H^2, \|P\|_{L^2}=1,(P,\phi)_{L^2}=0\}
\end{equation}
with
$$
\langle \Lum P,P\rangle=\int_\R\big(P_{xx}^2+P_{x}^2+\alpha P^2-3\phi^2P^2\big)dx,
$$
it suffices to prove that $a_0=0$ and that the infimum in \eqref{defa0} is achieved, because in this case, the Lagrange multiplier theory implies that $\gamma=a_0$.

First of all, note from Proposition \ref{specL1} that $\Lum$ is bounded from below; thus, $a_0$ is finite. Moreover, since $\Lum\phi'=0$ and $(\phi,\phi')_{L^2}=0$, we deduce that $a_0\leq0$. We shall prove that $a_0=0$, by showing  that $a_0\geq0$. To accomplish this, we recall  the following result due to Weinstein \cite{we} (see also \cite[Chapter 6]{an}).

\begin{lemma}\label{welemma}
Let $A$ be a self-adjoint operator on $L^2(\R)$ satisfying:
\begin{itemize}
  \item[(i)] $A$ has exactly one negative simple eigenvalue with positive associated eigenfunction, say, $\varphi$;
  \item[(ii)] zero is an isolated eigenvalue;
  \item[(iii)] there is $\psi\in [Ker(A)]^\perp$ such that $(\psi,\varphi)_{L^2}\neq0$ and
  $$
  -\infty<a_0:=\min\{\langle Af,f\rangle;\;\|f\|_{L^2}=1, (f,\psi)_{L^2}=0\},
  $$
  where $\langle A\cdot,\cdot\rangle$ denotes the quadratic form associated with $A$.
\end{itemize}
 If $I:=(A^{-1}\psi,\psi)_{L^2}\leq0$ then $a_0\geq0$.
\end{lemma}

Now we turn back to the proof of Lemma \ref{bouL1} and apply Lemma \ref{welemma} with $A=\Lum$ and $\psi=\phi$. First, it should be noted that it is implicitly assumed in Lemma \ref{welemma} that $a_0$ is achieved for a suitable function.
 Thus, in what follows, we prove that the infimum in \eqref{defa0} is achieved. This is well-known by now, but for the sake of completeness we bring some details (see also \cite[Appendix]{albo} or \cite[Chapter 6]{an}).

Let $\{P_j\}_{j\in\mathbb{N}}\subset H^2$ be a minimizing sequence, that is, a sequence satisfying $\|P_j\|_{L^2}=1$, $(P_j,\phi)_{L^2}=0$, for all $j\in\mathbb{N}$, and
\begin{equation}\label{bound1}
\la\mathcal{L}_1P_j,P_j\ra\rightarrow a_0,\ \ \ \mbox{as}\ j\rightarrow +\infty.
\end{equation}
Since $\|P_j\|_{L^2}=1$, it is easily seen that
\begin{equation}\label{bound1.1}
0<\alpha\leq \|P_{j,xx}\|^2_{L^2}+\|P_{j,x}\|^2_{L^2}+\alpha\|P_{j}\|^2_{L^2}= \la\mathcal{L}_1P_j,P_j\ra+3\int_{\R}\phi^2P_j^2dx.
\end{equation}
Because $\phi$ is bounded, we see at once that the right-hand side of \eqref{bound1.1} is bounded. Hence,  there is a constant $C>0$ such that $\|P_j\|_{H^2}\leq C$, for all $j\in\mathbb{N}$. So, there are a subsequence, which we still denote by $\{P_j\}$, and  $P\in H^2$ such that $P_j\rightharpoonup P$ in $H^2$. On account of the weak convergence in $L^2$, we have $(P,\phi)_{L^2}=0$.

Taking into account the compactness of the embedding $H^2(-R,R)\hookrightarrow L^2(-R,R)$, for all $R>0$, a standard Cantor diagonalization argument allows us to obtain a subsequence, which we still denote by $\{P_j\}$, such that $P_j\to P$ in $L^2_{loc}(\R)$. By writing, for a fixed large $R>0$,
\begin{equation}\label{bound1.2}
\int_{\R}\phi^2(P_j^2-P^2)dx=\int_{|x|\leq R}\phi^2(P_j^2-P^2)dx+\int_{|x|> R}\phi^2(P_j^2-P^2)dx,
\end{equation}
we see that first term in the right-hand side of \eqref{bound1.2} goes to zero, as $j\to\infty$, because $\phi$ is bounded and $P_j\to P$ in $L^2_{loc}(\R)$. Also, taking the advantage that $\phi(x)\to0$, as $|x|\to\infty$, the second term in the right-hand side of \eqref{bound1.2} can be made sufficiently small if we choose $R$ sufficiently large. As a consequence, we deduce that
\begin{equation}\label{bound2}
\int_{\mathbb{R}}\phi^2P_j^2dx\rightarrow \int_{\mathbb{R}}\phi^2P^2dx,\ \ \ \ \mbox{as}\ j\rightarrow+\infty.
\end{equation}

Now, we claim that $P\neq0$. On the contrary, suppose that $P=0$. Then, \eqref{bound2} implies that $\int_{\mathbb{R}}\phi^2P_j^2dx\rightarrow0$, as $j\to\infty$. By taking the limit in \eqref{bound1.1}, as $j\to\infty$, we obtain, in view of \eqref{bound1},
$$
0\leq\alpha\leq a_0.
$$
This fact generates a contradiction and therefore $P\neq 0$.

Next, \eqref{bound1}, \eqref{bound2}, and the lower semicontinuity of the weak convergence yield
 $$
\la\mathcal{L}_1P,P\ra=a_0\qquad \mbox{and} \qquad \|P\|_{L^2}\leq1.
$$
We now prove that the infimum in \eqref{defa0} is achieved. Indeed, by defining $Q=\frac{P}{\|P\|_{L^2}}$, we see that $\|Q\|_{L^2}=1$ and $(Q,\phi)_{L^2}=0$. Moreover,
\begin{equation}\label{bound4}
a_0 \|P\|_{L^2}^2\leq\|P\|_{L^2}^2\la\mathcal{L}_1Q,Q\ra=\la\mathcal{L}_1P,P\ra=a_0.
\end{equation}
Since $a_0\leq0$, we consider two cases.
\vskip.2cm

\noindent {\bf Case 1.} $a_0<0.$ Here, \eqref{bound4} immediately gives $\|P\|_{L^2}\geq1$. Therefore, $\|P\|_{L^2}=1$ and the infimum is achieved at the function $P$.

\vskip.2cm

\noindent {\bf Case 2.} $a_0=0.$ In this case, since $\la\mathcal{L}_1P,P\ra=\la\mathcal{L}_1Q,Q\ra=0$ and $\|Q\|_{L^2}=1$, the infimum is achieved at the function $Q$.

Finally, from arguments due to Albert \cite[page 17]{al} (see also Remark \ref{remmm} below), we deduce
that the quantity $I$ defined in Lemma \ref{welemma} is negative.  An application of 
Lemma \ref{welemma} then gives us $a_0\geq0$. The proof of the lemma is
thus completed.
\end{proof}

\begin{lemma}\label{bouL11}
Let
\begin{equation}\label{a3}
\delta_1:=\inf\{(\Lum P,P)_{L^2}; \;P\in H^4, \|P\|_{L^2}=1,(P,\phi)_{L^2}=(P,\phi')_{L^2}=0\}.
\end{equation}
Then $\delta_1>0$.
\end{lemma}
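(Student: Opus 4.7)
The plan is to argue by contradiction, invoking Lemma \ref{bouL1} together with a Lagrange multiplier analysis. Since the feasible set in \eqref{a3} is a subset of the one used to define $\gamma$, one has $\delta_1\geq\gamma=0$ for free; the task is to preclude the equality $\delta_1=0$. Suppose to the contrary that $\delta_1=0$ and fix a minimizing sequence $\{P_j\}\subset H^4$ for \eqref{a3}.

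Existence of a minimizer $P_*$ is obtained by essentially re-running the compactness reasoning in the proof of Lemma \ref{bouL1}: the identity $\|P_{j,xx}\|_{L^2}^2+\|P_{j,x}\|_{L^2}^2+\alpha=\la\Lum P_j,P_j\ra+3\int_\R\phi^2 P_j^2\,dx$ produces an $H^2$-bound, one extracts a subsequence with $P_j\rightharpoonup P_*$ in $H^2$, the two orthogonality constraints pass to the limit, and local compactness combined with the decay of $\phi$ forces $\int_\R\phi^2 P_j^2\,dx\to\int_\R\phi^2 P_*^2\,dx$. The possibility $P_*=0$ is ruled out exactly as before, using $\alpha>0$, and the Case~1/Case~2 dichotomy of that proof then yields a minimizer $P_*\in H^2$ satisfying $\|P_*\|_{L^2}=1$, $(P_*,\phi)_{L^2}=(P_*,\phi')_{L^2}=0$, and $\la\Lum P_*,P_*\ra=0$; ellipticity of $\Lum$ bootstraps $P_*$ to $H^4$.

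Lagrange multipliers then produce $\nu,\lambda,\mu\in\R$ such that $\Lum P_*=\nu P_*+\lambda\phi+\mu\phi'$. Testing with $P_*$ and using the three constraints gives $\nu=\la\Lum P_*,P_*\ra=0$; testing with $\phi'$, using self-adjointness, $\Lum\phi'=0$ (from \eqref{soleq}), and $(\phi,\phi')_{L^2}=0$, gives $\mu=0$. Hence $\Lum P_*=\lambda\phi$. The case $\lambda=0$ would force $P_*\in\ker\Lum=\mathrm{span}\{\phi'\}$, incompatible with $(P_*,\phi')_{L^2}=0$ and $\|P_*\|_{L^2}=1$, so $\lambda\neq 0$. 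Because $(\phi,\phi')_{L^2}=0$ places $\phi$ in $\mathrm{Range}(\Lum)$, we may let $\chi\in\{\phi'\}^\perp$ denote the unique solution of $\Lum\chi=\phi$; the constraint $(P_*,\phi')_{L^2}=0$ together with $(\chi,\phi')_{L^2}=0$ then forces $P_*=\lambda\chi$, and the remaining constraint $(P_*,\phi)_{L^2}=0$ reduces to $(\chi,\phi)_{L^2}=0$. But this inner product equals $(\Lum^{-1}\phi,\phi)_{L^2}=I$ from Lemma \ref{welemma}, and the last line of the proof of Lemma \ref{bouL1} (via Albert's arguments and Remark \ref{remmm}) guarantees the strict inequality $I<0$. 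This is the desired contradiction.

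The expected principal obstacle is precisely the strict negativity of $I$: Lemma \ref{welemma} only delivers $I\leq 0$, and the Lagrange argument above collapses if $I=0$ were possible. So the whole scheme rests on that strict inequality, which in turn comes from the explicit $\sech^2$ form of $\phi$ through the analysis of Albert; this is the single place where the concrete structure of the standing wave enters. The compactness step used to produce the minimizer is routine, following verbatim the template already deployed in the proof of Lemma \ref{bouL1}.
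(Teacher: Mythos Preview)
Your proof is correct and follows essentially the same approach as the paper: both use the subset relation to get $\delta_1\geq0$, extract a minimizer via the compactness argument of Lemma~\ref{bouL1}, run a Lagrange multiplier analysis to reduce to $\Lum P_*=\lambda\phi$, and then split on $\lambda$, invoking Albert's strict inequality $I<0$ (Remark~\ref{remmm}) to rule out $\lambda\neq0$. The only cosmetic differences are that you assume $\delta_1=0$ before establishing the minimizer and that you make the elliptic bootstrap to $H^4$ explicit.
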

\begin{proof}
Since $\{(\mathcal{L}_{1}P,P)_{L^2}; \ \|P\|=1,\ (P, \phi)_{L^2}=0,\
(P, \phi')_{L^2}=0\}\subset\{(\mathcal{L}_{1}P,P)_{L^2}; \ \|P\|=1,\
(P, \phi)_{L^2}=0,\}$, we get from Lemma \ref{bouL1} that $\delta_1\geq0$. In addition, by using similar arguments as in Lemma \ref{bouL1}, we get that the infimum is achieved at a function $\kappa$ . Next, assume by contradiction that $\delta_1=0$.  From
Lagrange's Multiplier Theorem there are $m,n,r\in\mathbb{R}$ such
that
\begin{equation}\label{lagrange}
\mathcal{L}_{1}\kappa=m\kappa+n\phi +r\phi'.
\end{equation}
Now, $(\mathcal{L}_{1}\kappa,\kappa)_{L^2}=\delta_1=0$ immediately
implies $m=0$. On the other hand, from the self-adjointness of
$\mathcal{L}_{1}$, we see that
$0=(\mathcal{L}_{1}\phi',\kappa)_{L^2} =r(\phi',\phi')_{L^2}$ and
$r=0$. These facts allow us to deduce that $\mathcal{L}_{1}\kappa=n\phi$. We now consider two cases:

\vskip.2cm

\noindent {\bf Case 1.} $n=0.$ Here, we have $\mathcal{L}_{1}\kappa=0$. Since $\ker(\mathcal{L}_{1})=[\phi']$ we obtain $\kappa=\mu\phi'$ for some $\mu\neq0$. This fact generates a
contradiction because $\kappa\bot\phi'$.

\vskip.2cm

\noindent {\bf Case 2.} $n\neq0.$ In this case, the Fredholm theory implies that the equation $\mathcal{L}_{1}\chi=\phi$ has a unique solution satisfying $(\chi,\phi')_{L^2}=0$. Since $n\neq0$, one sees that such a solution is $\chi=\kappa/n$. This allows us, in view of the definition of $\kappa$, deducing that $I:=(\chi,\phi)_{L^2}=0$. On the other hand, since $I$ does not depend on the choice of $\chi$ satisfying $\mathcal{L}_{1}\chi=\phi$, the arguments in \cite{al} gives us that $I\neq0$ (see Remark \ref{remmm}), which is a contradiction.

The lemma is thus proved.
\end{proof}

\begin{remark}\label{remmm}
In the proofs of Lemmas \ref{bouL1} and \ref{bouL11}, we used that $I:=(\chi,\phi)_{L^2}<0$. The theory established in Albert
 \cite{al} presents the construction of a function $\chi$ satisfying  $\mathcal{L}_{1}\chi=\phi$.  Actually, Albert has considered more general differential operators having the form
 $$
 \mathcal{L}=M+\alpha-\phi^p,
 $$
 where $M$ is a differential operator of order $2n$, $\alpha$ is real number, $p$ is an integer, and $\phi$ is a $\frac{2n}{p}$-power of the hyperbolic-secant function. The number $I$ is still defined by $I=(\chi,\phi)_{L^2}$, with $\chi$ satisfying $\mathcal{L}\chi=\phi$. We   briefly discuss the method of how to show that $I<0$, by defining
$$\eta=\sum_{i=0,i\neq1}^{\infty}\left(\frac{1}{1-\lambda_{i}(0)}\right)
\left\langle\frac{\widehat{\phi}}{w_0},\psi_{i,0}\right\rangle_X\psi_{i,0},
$$
where $X$ is the space defined in \eqref{spaceX} and $\langle\cdot,\cdot\rangle_X$ stands for its usual inner product.
Standard arguments enable us to deduce that the series 
converges in $X$, and so $\eta\in X$. Thus, we can choose
$\chi\in L^2(\R)$ such that $\widehat{\chi}=\eta$. A direct
computation gives  $\widehat{\mathcal{L}\chi}=\widehat{\phi}$. By
using the orthogonality of Gegenbauer's polynomials (which give
explicitly all the eigenfunctions $\psi_{i,0}$, $i=0,1,2,\ldots$) one deduces that
$$
I=a\sum_{j=0}^\infty\left(\frac{\lambda_{2j}}{1-\lambda_{2j}}\right)\left\{ \frac{\Gamma(2j+1)\cdot(2j+n+r-\frac{1}{2}}{\Gamma(2j+2n+2r-1)} \right\} \left\{ \frac{\Gamma(j+n)\Gamma(j+n+r-\frac{1}{2}}{\Gamma(j+1)\Gamma(j+r+\frac{1}{2})} \right\}^2,
$$
where $a=\left( \frac{2^{n+r-1}\Gamma(r)}{\pi\Gamma(n)}  \right)^2$, $r=\frac{2n}{p}$, and $\Gamma$ represents the well known Gamma function. For $n=2$ (which is our case), by using numerical computations, Albert showed that $I<0$ as long as $p<4.82$ (approximately). In our case, $p=2$.
\end{remark}

\begin{remark}
The construction presented in Remark \ref{remmm} is useful when we do not have a smooth curve of standing waves, depending on the parameter $\alpha$.
As is well known in the current literature, when dealing with a general nonlinear dispersive equation, the existence of a smooth curve of standing waves, say, $c\in \mathcal{I}\mapsto \phi_c$, $\mathcal{I}\subset \mathbb{R}$, parametrized by the wave velocity $c$ is sufficient to construct  an explicit $\chi$ in terms of $\frac{d\phi_c}{dc}$ (see e.g., \cite{al}, \cite{abh}, \cite{an}, \cite{bss}, \cite{bo}, \cite{gss1}, \cite{gss2}, \cite{st}, \cite{we}, and references therein). As we have mentioned in the introduction, here we do not have a smooth curve of explicit standing waves.
\end{remark} \label{rem4.6}

\begin{corollary}\label{corbou1}
Assume that $v=(P,Q)\in \Hh^4$ is such that
\begin{equation}\label{a4}
(Q,\phi)_{L^2}=(P,\phi)_{L^2}=(P,\phi')_{L^2}=0.
\end{equation}
Then, there is $\delta>0$ such that
\begin{equation}\label{a5}
(\mathcal{L}v,v)_{\Ll^2}\geq \delta\|v\|_{\Ll^2}^2.
\end{equation}
\end{corollary}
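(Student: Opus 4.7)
The plan is to exploit the diagonal structure of $\mathcal{L}$ so that the quadratic form splits into independent contributions from $P$ and $Q$, each of which can then be controlled by one of the two coercivity lemmas already proved. Concretely, since $\mathcal{L}=\mathrm{diag}(\Lum,\Ldois)$, for $v=(P,Q)$ one has
\begin{equation*}
(\mathcal{L}v,v)_{\Ll^2}=(\Lum P,P)_{L^2}+(\Ldois Q,Q)_{L^2},
\qquad
\|v\|_{\Ll^2}^{2}=\|P\|_{L^2}^{2}+\|Q\|_{L^2}^{2}.
\end{equation*}

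Next I would separately bound each term. The orthogonality conditions $(P,\phi)_{L^2}=(P,\phi')_{L^2}=0$ are exactly the hypotheses of Lemma \ref{bouL11}, so, applying it to $P/\|P\|_{L^2}$ (and trivially if $P=0$), we obtain $(\Lum P,P)_{L^2}\geq \delta_1\|P\|_{L^2}^{2}$. Likewise, the hypothesis $(Q,\phi)_{L^2}=0$ together with Lemma \ref{bouL2} yields $(\Ldois Q,Q)_{L^2}\geq \delta_2\|Q\|_{L^2}^{2}$. Setting $\delta:=\min\{\delta_1,\delta_2\}>0$, adding the two inequalities gives the desired bound
\begin{equation*}
(\mathcal{L}v,v)_{\Ll^2}\geq \delta_1\|P\|_{L^2}^{2}+\delta_2\|Q\|_{L^2}^{2}\geq \delta\|v\|_{\Ll^2}^{2}.
\end{equation*}

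There is essentially no obstacle left at this stage: all the genuine work lies in Lemmas \ref{bouL2} and \ref{bouL11}, whose proofs already handled the spectral subtleties via Weinstein's criterion and the Albert-type computation of the sign of $I=(\chi,\phi)_{L^2}$. The corollary is then just the assembly step that feeds these two one-component coercivity estimates into the block-diagonal form of $\mathcal{L}$.
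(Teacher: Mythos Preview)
Your proof is correct and follows essentially the same approach as the paper: you split the quadratic form via the diagonal structure of $\mathcal{L}$, invoke Lemma~\ref{bouL2} for the $Q$-component and Lemma~\ref{bouL11} (after normalization) for the $P$-component, and take $\delta=\min\{\delta_1,\delta_2\}$. The paper's proof is identical in substance, only slightly more terse.
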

\begin{proof}
From Lemmas \ref{bouL2} and \ref{bouL11} there are $\delta_1,\delta_2>0$ such that $(\Ldois Q,Q)_{L^2}\geq\delta_2\|Q\|_{L^2}^2$ and $(\Lum P,P)_{L^2}\geq\delta_1\|P\|_{L^2}^2$. Since $\mathcal{L}v=(\Lum P, \Ldois Q)$, it is sufficient to take $\delta=\min\{\delta_1,\delta_2\}$.
\end{proof}

\begin{lemma}\label{gard}
There exist positive constants $\varepsilon$ and $C$ such that
$$
(\mathcal{L}v,v)_{\Ll^2}\geq \varepsilon\|v\|_{\Hh^2}^2 -C\|v\|_{\Ll^2}^2,
$$
for all $v=(P,Q)\in \Hh^4$.
\end{lemma}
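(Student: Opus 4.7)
The plan is to exploit the block-diagonal structure of $\mathcal{L}$ and prove a Gårding inequality for each of $\Lum$ and $\Ldois$ separately; the full statement then follows by summation with $\varepsilon,C$ taken as minima/maxima of the component constants. So it suffices to show that for some $\varepsilon_0>0$ and $C_0>0$ depending only on $\alpha$ and $\|\phi\|_{L^\infty}$ one has
$$
(\Lum P,P)_{L^2}\ \geq\ \varepsilon_0\|P\|_{H^2}^2-C_0\|P\|_{L^2}^2,\qquad (\Ldois Q,Q)_{L^2}\ \geq\ \varepsilon_0\|Q\|_{H^2}^2-C_0\|Q\|_{L^2}^2,
$$
for every $P,Q\in H^4$.

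To establish the first inequality, I write the quadratic form explicitly as
$$
(\Lum P,P)_{L^2}=\int_\R\bigl(P_{xx}^2+P_x^2+\alpha P^2-3\phi^2 P^2\bigr)\,dx,
$$
which is legitimate on $H^4$ (and extends by density/integration by parts to $H^2$). Since $\phi$ as defined in \eqref{solitary} is uniformly bounded, the ``bad'' potential term is controlled by $3\|\phi\|_{L^\infty}^2\|P\|_{L^2}^2$. Hence
$$
(\Lum P,P)_{L^2}\ \geq\ \|P_{xx}\|_{L^2}^2+\|P_x\|_{L^2}^2+\alpha\|P\|_{L^2}^2-3\|\phi\|_{L^\infty}^2\|P\|_{L^2}^2.
$$
Choosing $\varepsilon_0=\min\{1,\alpha\}$ and using the equivalent norm $\|P\|_{H^2}^2=\|P\|_{L^2}^2+\|P_x\|_{L^2}^2+\|P_{xx}\|_{L^2}^2$ yields
$$
(\Lum P,P)_{L^2}\ \geq\ \varepsilon_0\|P\|_{H^2}^2-3\|\phi\|_{L^\infty}^2\|P\|_{L^2}^2.
$$
The corresponding estimate for $\Ldois$ is identical, except that the factor $3$ is replaced by $1$, so the same $\varepsilon_0$ works and $C_0=3\|\phi\|_{L^\infty}^2$ suffices for both.

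Finally, writing $v=(P,Q)$ and adding the two componentwise inequalities gives
$$
(\mathcal{L}v,v)_{\Ll^2}=(\Lum P,P)_{L^2}+(\Ldois Q,Q)_{L^2}\ \geq\ \varepsilon_0\bigl(\|P\|_{H^2}^2+\|Q\|_{H^2}^2\bigr)-C_0\bigl(\|P\|_{L^2}^2+\|Q\|_{L^2}^2\bigr),
$$
which is exactly the claim with $\varepsilon=\varepsilon_0$ and $C=C_0$. There is no real obstacle here: unlike the coercivity estimates in Corollary \ref{corbou1}, which required the delicate spectral information of Section \ref{sepecsec} because no orthogonality conditions are imposed; what makes this step essentially elementary is that the unbounded quartic term $\|P_{xx}\|_{L^2}^2$ already provides the required control of the $H^2$ norm, while the only obstruction — the multiplication operator by $\phi^2$ — is a bounded perturbation that can be absorbed into the $\|\cdot\|_{\Ll^2}^2$ remainder.
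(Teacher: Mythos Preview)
Your proof is correct and follows the same overall structure as the paper's: treat $\Lum$ and $\Ldois$ separately and then take $\varepsilon=\min$, $C=\max$. The only difference is that the paper simply invokes the abstract G{\aa}rding inequality for strongly elliptic operators (citing Yosida), whereas you write out the quadratic form explicitly and use the boundedness of $\phi$ to obtain the concrete constants $\varepsilon_0=\min\{1,\alpha\}$ and $C_0=3\|\phi\|_{L^\infty}^2$; your version is thus a self-contained specialization of the general result the paper quotes.
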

\begin{proof}
From Garding's inequality (see \cite[page 175]{yo}), there exist positive constants $\varepsilon_1,\varepsilon_2,C_1$, and $C_2$
such that
$$
(\Lum P,P)_{L^2}\geq \varepsilon_1\|P\|^2_{H^2}-C_1\|P\|^2_{L^2}, \quad (\Ldois Q,Q)_{L^2}\geq \varepsilon_2\|Q\|^2_{H^2}-C_2\|Q\|^2_{L^2}.
$$
By the definition of $\mathcal{L}$ it suffices to choose $\varepsilon=\min\{\varepsilon_1,\varepsilon_2\}$ and $C=\max\{C_1,C_2\}$.
\end{proof}

\begin{remark}\label{remI}
It is not difficult to see that $\mathcal{L}$ is the unique self-adjoint linear operator such that
\begin{equation}\label{a6}
\langle G''(\Phi)v,z\rangle =(\mathcal{L}v,z)_{\Ll^2}, \qquad v\in \Hh^6, \; z\in \Hh^2,
\end{equation}
where $G''$ represents the second order Fr\'echet derivative of $G$.
In particular, $G''(\Phi)v=\mathcal{I}\mathcal{L}v$, $v\in \Hh^6$, where $\mathcal{I}:\Hh^2\to \Hh^{-2}$ is the natural injection of $\Hh^2$ into $\Hh^{-2}$ with respect to the inner product in $\Ll^2$, that is,
\begin{equation}\label{a6.0}
\langle \mathcal{I}u,v\rangle=(u,v)_{\Ll^2}, \qquad u,v\in \Hh^2.
\end{equation}
See Lemma 3.3 in \cite{st} for details.
\end{remark}

\begin{lemma}\label{lemasc}
Assume that $v=(P,Q)\in \Hh^2$ is such that
\begin{equation*}
(Q,\phi)_{L^2}=(P,\phi)_{L^2}=(P,\phi')_{L^2}=0.
\end{equation*}
Then, there is $\delta>0$ such that
\begin{equation*}
\langle G''(\Phi) v,v\rangle\geq \delta\|v\|_{\Hh^2}^2.
\end{equation*}
\end{lemma}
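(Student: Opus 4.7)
The plan is to couple the $\Ll^2$-coercivity of Corollary \ref{corbou1} with the G\aa rding-type inequality of Lemma \ref{gard} via a convex combination, and then extend the resulting estimate from $\Hh^4$ to $\Hh^2$ by a density argument. By Remark \ref{remI}, for $v=(P,Q)\in\Hh^4$ one has
\begin{equation*}
\langle G''(\Phi)v,v\rangle = (\mathcal{L}v,v)_{\Ll^2} = \int_\R\bigl(P_{xx}^2+Q_{xx}^2+P_x^2+Q_x^2+\alpha(P^2+Q^2)-3\phi^2P^2-\phi^2Q^2\bigr)\,dx,
\end{equation*}
after an integration by parts. The integral on the right depends continuously on $v\in\Hh^2$, and I would use it as the natural extension of the quadratic form to all of $\Hh^2$.

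First I would restrict attention to $v\in\Hh^4$ obeying the three orthogonality conditions. Corollary \ref{corbou1} produces $\delta_0>0$ with $(\mathcal{L}v,v)_{\Ll^2}\geq\delta_0\|v\|_{\Ll^2}^2$, while Lemma \ref{gard} supplies $\varepsilon,C>0$ with $(\mathcal{L}v,v)_{\Ll^2}\geq\varepsilon\|v\|_{\Hh^2}^2-C\|v\|_{\Ll^2}^2$. Splitting $(\mathcal{L}v,v)_{\Ll^2}=\theta(\mathcal{L}v,v)_{\Ll^2}+(1-\theta)(\mathcal{L}v,v)_{\Ll^2}$ and applying each estimate to the appropriate piece gives
\begin{equation*}
(\mathcal{L}v,v)_{\Ll^2}\geq (1-\theta)\varepsilon\|v\|_{\Hh^2}^2+\bigl[\theta\delta_0-(1-\theta)C\bigr]\|v\|_{\Ll^2}^2.
\end{equation*}
Selecting $\theta=C/(C+\delta_0)\in(0,1)$ annihilates the bracketed coefficient and yields $(\mathcal{L}v,v)_{\Ll^2}\geq\delta\|v\|_{\Hh^2}^2$ with $\delta:=\varepsilon\delta_0/(C+\delta_0)>0$.

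To lift this to general $v\in\Hh^2$, I would take any approximating sequence $w_n\in\Hh^4$ with $w_n\to v$ in $\Hh^2$, and then subtract off (in $\Ll^2$) the projection of $w_n$ onto the finite-dimensional subspace spanned by $(\phi,0)$, $(\phi',0)$, $(0,\phi)$. These three generators are Schwartz, hence in $\Hh^4$, and they are mutually orthogonal in $\Ll^2$ (since $\phi$ is even and $\phi'$ is odd), so the correction is explicit. The corrected sequence $\tilde v_n\in\Hh^4$ meets the orthogonality conditions exactly, and the three projection coefficients tend to zero because $v$ already satisfies them; thus $\tilde v_n\to v$ in $\Hh^2$. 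Applying the previous step to each $\tilde v_n$ and passing to the limit via the continuity of the integral representation of $\langle G''(\Phi)\cdot,\cdot\rangle$ on $\Hh^2$ delivers the desired inequality. The convex-combination step is the real content; the only delicacy, rather than a serious obstacle, is verifying that the quadratic form extends continuously to $\Hh^2$ with the integral expression above, which is immediate from the Sobolev algebra and the boundedness of $\phi$.
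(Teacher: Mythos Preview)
Your proof is correct and follows essentially the same route as the paper: combine Corollary~\ref{corbou1} with Lemma~\ref{gard} to upgrade the $\Ll^2$ lower bound to an $\Hh^2$ lower bound (your convex-combination step is algebraically equivalent to the paper's substitution of $\|v\|_{\Ll^2}^2\leq\delta_0^{-1}(\mathcal{L}v,v)_{\Ll^2}$ into G\aa rding, and yields the identical constant $\varepsilon\delta_0/(C+\delta_0)$), and then pass to $\Hh^2$ by density. Your density argument is more explicit than the paper's one-line appeal, in that you verify the approximants can be chosen to preserve the orthogonality constraints, but this is a presentational rather than substantive difference.
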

\begin{proof}
By density it suffices to assume that $v$ belongs to $\Hh^6$. From Corollary \ref{corbou1} and Lemma \ref{gard}, one infers that
$$
\left(1+\dfrac{C}{\delta}\right)  (\mathcal{L}v,v)_{\Ll^2}\geq \varepsilon\|v\|_{\Hh^2}^2,
$$
that is,
$$
 (\mathcal{L}v,v)_{\Ll^2}\geq \frac{\varepsilon\delta}{C+\delta}\|v\|_{\Hh^2}^2.
$$
The conclusion then follows from \eqref{a6}.
\end{proof}

In what follows, let $\RR:\Hh^2\to \Hh^{-2}$ be the Riesz isomorphism with respect to inner product in $\Hh^2$, that is,
\begin{equation}\label{a6.1}
\langle \mathcal{R}u,v\rangle=(u,v)_{\Hh^2}, \qquad u,v\in \Hh^2.
\end{equation}

Lemma \ref{lemasc} establishes the positivity of $G''(\Phi)$ under an orthogonality condition in $L^2$. Next lemma shows that the same positivity holds if we assume the orthogonality in $H^2$.

\begin{lemma}\label{prinlemma}
Let $\mathcal{I}$ be the operator defined  in Remark \ref{remI} and $J$ is as in \eqref{J}. Let
\begin{equation}\label{defZ}
\begin{split}
\Z&=\{J\Phi,\Phi',\RR^{-1}\mathcal{I}\Phi\}^{\perp}\\
&=\{z\in \Hh^2; (z,\Phi)_{\Hh^2}=(z,\Phi')_{\Hh^2}=(z,\RR^{-1}\mathcal{I}\Phi)_{\Hh^2}=0   \}.
\end{split}
\end{equation}
Then, there exists $\delta>0$ such that
\begin{equation}\label{a6.2}
\langle G''(\Phi)z,z\rangle\geq \delta \|z\|^2_{\Hh^2}, \qquad z\in \Z.
\end{equation}
\end{lemma}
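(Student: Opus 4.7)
The plan is to reduce the statement to Lemma \ref{lemasc} by decomposing any $z=(P,Q)\in\Z$ along the kernel directions of $\mathcal{L}$, which by Propositions \ref{specL1} and \ref{specL2} are exactly $(\phi',0)$ and $(0,\phi)$. Set
\begin{equation*}
\beta_1=\frac{(Q,\phi)_{L^2}}{\|\phi\|_{L^2}^2},\qquad \beta_2=\frac{(P,\phi')_{L^2}}{\|\phi'\|_{L^2}^2},
\end{equation*}
and let $z_0:=z-\beta_1(0,\phi)-\beta_2(\phi',0)$. Using $(\phi,\phi')_{L^2}=0$ together with the identity $(P,\phi)_{L^2}=0$ (which follows from the constraint $(z,\RR^{-1}\mathcal{I}\Phi)_{\Hh^2}=0$ via \eqref{a6.0}--\eqref{a6.1}), a direct verification shows that $z_0$ satisfies all three orthogonality conditions required by Lemma \ref{lemasc}, so that
\begin{equation*}
\langle G''(\Phi)z_0,z_0\rangle\geq \delta'\|z_0\|_{\Hh^2}^2
\end{equation*}
for some $\delta'>0$.

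Next I would show that the mixed terms in the expansion of $\langle G''(\Phi)z,z\rangle$ vanish. Indeed, since $\mathcal{L}_1\phi'=0$ and $\mathcal{L}_2\phi=0$, we have $\mathcal{L}(\phi',0)=\mathcal{L}(0,\phi)=0$; invoking Remark \ref{remI} (after a standard density argument placing $z_0$ in $\Hh^6$) and the self-adjointness of $\mathcal{L}$, every cross term such as $\langle G''(\Phi)z_0,(0,\phi)\rangle$ collapses to the $\Ll^2$ inner product of $z_0$ against $\mathcal{L}$ evaluated on a null direction, hence equals zero. Consequently, $\langle G''(\Phi)z,z\rangle=\langle G''(\Phi)z_0,z_0\rangle$.

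The remaining step, and in my view the main obstacle, is the uniform norm comparison $\|z_0\|_{\Hh^2}\geq c\|z\|_{\Hh^2}$ on $\Z$, because the splitting above is not $\Hh^2$-orthogonal. I would argue by contradiction: suppose there exist $z_n\in\Z$ with $\|z_n\|_{\Hh^2}=1$ and the associated $z_{0,n}\to 0$ in $\Hh^2$. The Cauchy--Schwarz inequality applied to the formulas for $\beta_{1,n}$ and $\beta_{2,n}$ shows both scalar sequences are bounded, so along a subsequence they converge to limits $\bar\beta_1,\bar\beta_2$, yielding $z_n\to w:=\bar\beta_1(0,\phi)+\bar\beta_2(\phi',0)$ strongly in $\Hh^2$. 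Closedness of $\Z$ forces $w\in\Z$; the constraints $(w,J\Phi)_{\Hh^2}=\bar\beta_1\|\phi\|_{H^2}^2=0$ and $(w,\Phi')_{\Hh^2}=\bar\beta_2\|\phi'\|_{H^2}^2=0$ give $\bar\beta_1=\bar\beta_2=0$, hence $w=0$, which contradicts $\|w\|_{\Hh^2}=\lim\|z_n\|_{\Hh^2}=1$. Combining the three steps yields $\langle G''(\Phi)z,z\rangle\geq \delta'c^2\|z\|_{\Hh^2}^2$, giving the inequality \eqref{a6.2} with $\delta=\delta'c^2$.
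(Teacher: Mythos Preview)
Your proof is correct and follows the same overall strategy as the paper: project $z$ onto the kernel directions $(0,\phi)\propto J\Phi$ and $(\phi',0)=\Phi'$ using $\Ll^2$-coefficients, invoke the third constraint to get $(P,\phi)_{L^2}=0$, apply Lemma~\ref{lemasc} to the remainder, and use $G''(\Phi)(0,\phi)=G''(\Phi)(\phi',0)=0$ to kill the cross terms.

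The only real difference is the step you single out as ``the main obstacle'', and there the paper's argument is noticeably cleaner than your compactness detour. The key observation you are missing is that the two correction vectors $(0,\phi)$ and $(\phi',0)$ are \emph{exactly} scalar multiples of $J\Phi$ and $\Phi'$, which are two of the three vectors to which every $z\in\Z$ is already $\Hh^2$-orthogonal. Hence
\[
(z,\,z-z_0)_{\Hh^2}=\beta_1\,(z,J\Phi)_{\Hh^2}+\beta_2\,(z,\Phi')_{\Hh^2}=0,
\]
so $\|z\|_{\Hh^2}^2=(z,z_0)_{\Hh^2}\le\|z\|_{\Hh^2}\|z_0\|_{\Hh^2}$ by Cauchy--Schwarz, giving $\|z\|_{\Hh^2}\le\|z_0\|_{\Hh^2}$ directly (i.e.\ your constant is $c=1$). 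This replaces the contradiction/subsequence argument entirely and makes the proof shorter and sharper; otherwise the two arguments coincide.
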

\begin{proof}
We start by defining
$$\vf_1=-\dfrac{1}{\|J\Phi\|_{\Ll^2}}J\Phi, \quad \vf_2=\dfrac{1}{\|\Phi'\|_{\Ll^2}}\Phi', \quad \psi_1=J\vf_1, \qquad \psi_2=J\vf_2.
$$
It is clear that, $\|\vf_1\|_{\Ll^2}=\|\vf_2\|_{\Ll^2}=\|\psi_1\|_{\Ll^2}=\|\psi_2\|_{\Ll^2}=1$. In addition, since $J$ is skew-symmetric,
\begin{equation}\label{a7}
(\vf_1,\psi_1)_{\Ll^2}=(\vf_2,\psi_2)_{\Ll^2}=0.
\end{equation}
Also, from the definition, we promptly deduce
\begin{equation}\label{a8}
(\vf_1,\vf_2)_{\Ll^2}=(\psi_1,\vf_2)_{\Ll^2}=0.
\end{equation}
Take any $z\in\Z$ and define
$$
v:=z-(z,\vf_1)_{\Ll^2}\vf_1-(z,\vf_2)_{\Ll^2}\vf_2-(z,\psi_1)_{\Ll^2}\psi_1.
$$
Let us show that $v$ satisfies the assumptions in Lemma \ref{lemasc}. Indeed, from \eqref{a7} and \eqref{a8}, it is not difficult to see that
$$
(v,\vf_1)_{\Ll^2}=(v,\vf_2)_{\Ll^2}=(v,\psi_1)_{\Ll^2}=0.
$$
If we assume that $v$ writes as $(P,Q)$, these last equalities reduces to
$$
(Q,\phi)_{L^2}=(P,\phi)_{L^2}=(P,\phi')_{L^2}=0.
$$
An application of Lemma \ref{lemasc} yields the existence of $\delta>0$ such that
\begin{equation}\label{a9}
\langle G''(\Phi) v,v\rangle\geq \delta\|v\|_{\Hh^2}^2.
\end{equation}
In view of \eqref{a6.1}, \eqref{a9} is equivalent to
\begin{equation}\label{a10}
(\RR^{-1} G''(\Phi) v,v)_{\Hh^2}\geq \delta\|v\|_{\Hh^2}^2.
\end{equation}
Note that, from \eqref{a6.0} and \eqref{a6.1},
\begin{equation}\label{a10.1}
(f,g)_{\Ll^2}=\langle \I f,g\rangle=(\RR^{-1}\I f,g)_{\Hh^2}, \qquad f,g\in\Hh^2.
\end{equation}
Hence, by hypotheses,
$$
(z,\psi_1)_{\Ll^2}=\dfrac{1}{\|J\Phi\|_{\Ll^2}}(\Phi,z)_{\Ll^2}=\dfrac{1}{\|J\Phi\|_{\Ll^2}}(\RR^{-1}\I
\Phi,z)_{\Hh^2}=0,
$$
which implies that
\begin{equation}\label{a11}
v:=z-(z,\vf_1)_{\Ll^2}\vf_1-(z,\vf_2)_{\Ll^2}\vf_2.
\end{equation}
Since $\vf_1$ and $\vf_2$ are smooth functions and $\mathcal{L}\vf_1=\mathcal{L}\vf_2=0$, from \eqref{a6}, one infers that $G''(\Phi)\vf_1=G''(\Phi)\vf_2=0$. Therefore, \eqref{a11} gives
\begin{equation}\label{a12}
G''(\Phi)z=G''(\Phi)v.
\end{equation}
To simplify notation, let us set $\alpha_1=(z,\vf_1)_{\Ll^2}$, $\alpha_2=(z,\vf_2)_{\Ll^2}$, and $S:\Hh^2\to\Hh^2$ the self-adjoint operator defined by $S:=\RR^{-1}G''(\Phi)$. Thus, from the assumption,
$$
\|z\|_{\Hh^2}^2=(z,v+\alpha_1\vf_1+\alpha_2\vf_2)_{\Hh^2}=(z,v)_{\Hh^2}+\alpha_1(z,\vf_1)_{\Hh^2}+\alpha_2(z,\vf_2)_{\Hh^2}=(z,v)_{\Hh^2}.
$$
The Cauchy-Schwartz inequality then implies
$$
\|z\|_{\Hh^2}^2=|(z,v)_{\Hh^2}|\leq \|z\|_{\Hh^2}\|v\|_{\Hh^2},
$$
that is, $\|z\|_{\Hh^2}\leq \|v\|_{\Hh^2}$. Also, since $S$ is self-adjoint, it follows from \eqref{a12} that
$$
(Sz,z)_{\Hh^2}=(Sv,v)_{\Hh^2}+\alpha_1(Sv,\vf_1)_{\Hh^2}+\alpha_2(Sv,\vf_2)_{\Hh^2}=(Sv,v)_{\Hh^2}.
$$
Finally, combining this last equality with \eqref{a10},
$$
(Sz,z)_{\Hh^2}=(Sv,v)_{\Hh^2}\geq \delta\|v\|_{\Hh^2}^2\geq \delta \|z\|_{\Hh^2}^2,
$$
which is our statement \eqref{a6.2} in view of \eqref{a6.1}.
\end{proof}

\subsection{Lyapunov function and stability}\label{lysec}

Before proceeding, let us introduce a notation.  Given any real number $\ve>0$, by $\Omega_\Phi^\ve$, we denote the $\ve$-neighborhood of $\Omega_\Phi$, that is,
 $$
 \Omega_\Phi^\ve=\{v\in \Hh^2;\; d(v,\Omega_\Phi)<\ve\}.
 $$

In what follows, the following lemma will be necessary.

\begin{lemma}\label{mlema}
There exists $R>0$, depending only on $\Phi$, such that for all $\rho\in(0,R)$ and $v\in  \Omega_\Phi^\rho$, there exist $r_1,\theta_1\in\R$ satisfying
\begin{equation}\label{l3}
\|v-T_1(\theta_1) T_2({r_1})\Phi\|_{\Hh^2}<\rho
\end{equation}
and
\begin{equation}\label{l4}
\big(v-T_1(\theta_1) T_2({r_1})\Phi,J T_1(\theta_1)
T_2({r_1})\Phi)\big)_{\Hh^2} = \big(v-T_1(\theta_1)
T_2({r_1})\Phi,T_1(\theta_1) T_2({r_1})\Phi')\big)_{\Hh^2}=0.
\end{equation}
\end{lemma}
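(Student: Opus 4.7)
The approach is to realize $(\theta_1,r_1)$ as the minimizer of the squared distance
\[
h_v(\theta,r) := \|v - T_1(\theta)T_2(r)\Phi\|_{\Hh^2}^2, \qquad (\theta,r)\in\R^2,
\]
and to read off both conclusions from this: \eqref{l3} from the minimality itself, and the orthogonality relations \eqref{l4} from the vanishing of $\partial_\theta h_v$ and $\partial_r h_v$ at the minimizer. This is cleaner than the implicit function theorem route because the minimality gives the norm bound \eqref{l3} automatically, whereas IFT applied at a nearby point of the orbit only yields the bound up to a multiplicative constant.

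The first step is to prove that the infimum is attained. Since $T_1$ is $2\pi$-periodic in $\theta$, it suffices to consider $\theta\in[0,2\pi]$ and $r\in\R$. Because $\phi$ decays at infinity, $T_2(r)\Phi\rightharpoonup 0$ weakly in $\Hh^2$ as $|r|\to\infty$; combined with the isometry of $T_1(\theta)$, this gives
\[
h_v(\theta,r) = \|v\|_{\Hh^2}^2 - 2\bigl(v,\, T_1(\theta)T_2(r)\Phi\bigr)_{\Hh^2} + \|\Phi\|_{\Hh^2}^2 \;\longrightarrow\; \|v\|_{\Hh^2}^2 + \|\Phi\|_{\Hh^2}^2
\]
as $|r|\to\infty$, uniformly in $\theta\in[0,2\pi]$. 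Choose $R\in(0,\|\Phi\|_{\Hh^2})$; then for $v\in\Omega_\Phi^\rho$ with $\rho<R$, one has $\|v\|_{\Hh^2}\geq \|\Phi\|_{\Hh^2}-\rho$, so the asymptotic limit above strictly exceeds $\rho^2 > d(v,\Omega_\Phi)^2$. Therefore the infimum of $h_v$ is attained on a bounded set in $r$, and continuity yields a minimizer $(\theta_1,r_1)$ with $h_v(\theta_1,r_1) = d(v,\Omega_\Phi)^2 < \rho^2$, which is precisely \eqref{l3} in view of \eqref{l2}.

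At this interior minimum the first-order conditions $\partial_\theta h_v(\theta_1,r_1) = \partial_r h_v(\theta_1,r_1) = 0$ must hold. Using the identity $\partial_\theta T_1(\theta) = -J\,T_1(\theta)$ (immediate from \eqref{l0} together with \eqref{J}) and $\partial_r T_2(r)\Phi = -T_2(r)\Phi'$, a direct computation gives
\[
\partial_\theta h_v(\theta,r) = 2\bigl(v - T_1(\theta)T_2(r)\Phi,\; J\,T_1(\theta)T_2(r)\Phi\bigr)_{\Hh^2}
\]
and
\[
\partial_r h_v(\theta,r) = 2\bigl(v - T_1(\theta)T_2(r)\Phi,\; T_1(\theta)T_2(r)\Phi'\bigr)_{\Hh^2},
\]
so setting both to zero at $(\theta_1,r_1)$ yields exactly the two orthogonality identities in \eqref{l4}.

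The one substantive obstacle is the compactness step: guaranteeing that the infimum of $h_v$ is attained rather than merely approached as $|r|\to\infty$. This is handled by the weak decay $T_2(r)\Phi\rightharpoonup 0$ combined with the \emph{a priori} lower bound $\|v\|_{\Hh^2}\geq \|\Phi\|_{\Hh^2}-\rho$ coming from $v\in\Omega_\Phi^\rho$; the restriction $R<\|\Phi\|_{\Hh^2}$ is imposed precisely so that translating the profile very far away produces a strictly worse competitor than the one already in hand, forcing the infimum into a bounded strip where continuity closes the argument.
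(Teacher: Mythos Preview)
Your proof is correct and follows the same strategy as the paper's: minimize the squared distance $h_v(\theta,r)=\|v-T_1(\theta)T_2(r)\Phi\|_{\Hh^2}^2$ and obtain \eqref{l3} from the minimum value and \eqref{l4} from the first-order optimality conditions $\partial_\theta h_v=\partial_r h_v=0$. The only difference is in the compactness step ensuring the minimum is attained: the paper bounds a connected sublevel set $\Lambda\subset\R^2$ explicitly via direct estimates on $\|\Phi-T_1(\theta)T_2(r)\Phi\|_{\Hh^2}$ (showing $|\theta-\theta_0|\le\pi/2$ and $|r-r_0|\le\widehat r$), whereas you use $2\pi$-periodicity in $\theta$ together with the weak convergence $T_2(r)\Phi\rightharpoonup 0$ in $\Hh^2$ to force coercivity in $r$---a slightly more abstract but equally valid route.
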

\begin{proof}
For all $\theta,r\in\R$, we have
\begin{equation}\label{l5}
\begin{split}
\|\Phi-T_1(\theta) T_2(r)\Phi\|^2_{\Hh^2}& =
\|\phi-\phi(\cdot-r)\cos\theta\|^2_{H^2}+\|\phi(\cdot-r)\sin\theta\|^2_{H^2}\\
&=2\|\phi\|^2_{H^2}-2\cos\theta\big(\phi,\phi(\cdot-r)\big)_{H^2}.
\end{split}
\end{equation}
Using the Lebesgue convergence theorem, it is easy to see that $\big(\phi,\phi(\cdot-r)\big)_{H^2}\to0$, as $|r|\to\infty$. Choose $\widehat{r}>0$ such that  $|\big(\phi,\phi(\cdot-r)\big)_{H^2}|\leq \|\phi\|^2_{H^2}/2$ for all $r\in\R$ with $|r|\geq\widehat{r}$. Thus, in view of \eqref{l5},
\begin{equation}\label{l5.1}
\|\Phi-T_1(\theta) T_2(r)\Phi\|^2_{\Hh^2}\geq
2\|\phi\|^2_{H^2}-\|\phi\|^2_{H^2}=\|\phi\|^2_{H^2},
\end{equation}
for all $r\in\R$ with $|r|\geq\widehat{r}$, uniformly with respect to $\theta\in\R$.

Fix $R=\|\phi\|^2_{H^2}/2$.  If $\rho\in(0,R)$ and $v\in
\Omega_\Phi^\rho$,  then by definition, there exist
$\theta_0,r_0\in\R$ satisfying $\|v-T_1({\theta_0})
T_2({r_0})\Phi\|_{\Hh^2}<\rho$, that is, $T_1({\theta_0})
T_2({r_0})\Phi\in B_\rho(v)$, where $B_\rho(v)$ denotes the open
ball in $\Hh^2$ centered at $v$ with radius $\rho$. Let $\Lambda$ be
the largest connected open set in $\R^2$ containing the point
$(\theta_0,r_0)$ such that $T_1({\theta}) T_2({r})\Phi\in B_\rho(v)$
for all $(\theta,r)\in\Lambda$.

\vskip.2cm
{\bf Claim.} $\Lambda$ is bounded.
\vskip.2cm

Indeed, first note that
$$
\|\Phi-T_1(\pm{\pi/2})T_2(r)\Phi\|^2_{\Hh^2} =
\|\phi\|^2_{H^2}+\|\phi(\cdot-r)\|^2_{H^2}\geq\|\phi\|^2_{H^2},
$$
that is, for all $r\in\R$,
\begin{equation}\label{l6}
\|\Phi-T_1(\pm{\pi/2})T_2(r)\Phi\|^2_{\Hh^2}\geq2R.
\end{equation}
Hence,
\begin{equation*}
\begin{split}
\|v-&T_1({\theta_0\pm\pi/2})T_2(r+r_0)\Phi\|_{\Hh^2}\\
&\geq
\|T_2(r_0)\Phi-T_1(\pm\pi/2)T_2(r+r_0)\Phi\|_{\Hh^2} -\|v-T_1(\theta_0)T_2(r_0)\Phi\|^2_{\Hh^2}\\
&\geq \|\Phi-T_1(\pm\pi/2)T_2(r)\Phi\|_{\Hh^2}-\rho\\
&\geq2R-\rho\\
&>\rho.
\end{split}
\end{equation*}
This means that $\Lambda$ is contained in the strip
$$
\{(\theta,r)\in\R^2;\; |\theta-\theta_0|\leq\pi/2\}.
$$

On the other hand, from \eqref{l5.1},
\begin{equation*}
\begin{split}
\|v-&T_1(\theta+\theta_0)T_2(r_0\pm\widehat{r})\Phi\|_{\Hh^2}\\
&\geq
\|T_2(r_0)\Phi-T_1(\theta)T_2(r_0\pm\widehat{r})\Phi\|_{\Hh^2} -\|v-T_1(\theta_0)T_2(r_0)\Phi\|^2_{\Hh^2}\\
&\geq \|\Phi-T_1(\theta)T_2(\pm\widehat{r})\Phi\|_{\Hh^2}-\rho\\
&\geq2R-\rho\\
&>\rho.
\end{split}
\end{equation*}
This shows that $\Lambda$ is a subset of $\{(\theta,r)\in\R^2;\; |r-r_0|\leq \widehat{r}\}$ and completes the proof of our Claim.

Next define the function
$f(\theta,r)=\|v-T_1(\theta)T_2(r)\Phi\|^2_{\Hh^2}$. Since $\phi$ is
smooth, it is clear that $f$ is a $C^1$ function on $\R^2$.
Therefore,
$$
f(\theta_1,r_1):=\min_{\overline{\Lambda}}f(\theta,r)\leq f(\theta_0,r_0)<\rho^2.
$$
The continuity of $f$ then implies that $(\theta_1,r_1)$ is an interior point of $\Lambda$. Consequently, $\nabla f(\theta_1,r_1)=0$ is equivalent to \eqref{l4} and the proof of the lemma is completed.
\end{proof}

\begin{definition}\label{lydef}
A function $V:\Hh^2\to\R$ is said to be a Lyapunov function for the orbit $\Omega_\Phi$ if the following properties hold.
\begin{itemize}
\item[(i)] There exists $\rho>0$ such that $V:\Omega_\Phi^\rho\to\R$ is of class $C^2$ and, for all $v\in\Omega_\Phi$,
$$
V(v)=0, \quad\mbox{and}\quad \qquad V'(v)=0.
$$
\item[(ii)] There exists $c>0$ such that, for all $v\in\Omega_\Phi^\rho$,
$$
V(v)\geq c[d(v,\Omega_\Phi)]^2.
$$
\item[(iii)] For all $v\in\Omega_\Phi^\rho$, there hold
$$
\langle V'(v),J v\rangle=\langle V'(v),\partial_xv\rangle=0.
$$
\item[(iv)] If $U(t)$ is a global solution of the Cauchy problem associated with \eqref{hamiltonian} with initial datum $U_0$, then $V(U(t))=V(U_0)$, for all $t\geq0$.
\end{itemize}
\end{definition}

Next, we will show the existence of a Lyapunov function to $\Omega_\Phi$.
As in the proof of Lemma \ref{prinlemma}, we denote by
$S:\Hh^2\to\Hh^2$ the self-adjoint operator defined as $S:=\RR^{-1}G''(\Phi)$.

\begin{lemma}\label{lyalemma1}
There are positive constants $M$ and $\delta$ such that
$$
\big(Sv,v\big)_{\Hh^2}+2M\big(\RR^{-1}\I\Phi,v\big)_{\Hh^2}^2\geq\delta\|v\|^2_{\Hh^2},
$$
for all $v\in\{J\Phi,\Phi'\}^\perp=\{v\in\Hh^2;\,\big(J\Phi,v\big)_{\Hh^2}=\big(\Phi',v\big)_{\Hh^2}=0\}$.
\end{lemma}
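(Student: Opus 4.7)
The plan is to reduce to Lemma \ref{prinlemma} by an $\Hh^2$-orthogonal decomposition along the direction $\eta:=\RR^{-1}\I\Phi$. The first step is to check that $\eta$ already lies in $\{J\Phi,\Phi'\}^\perp$. Indeed, by \eqref{a10.1} we have $(\eta,J\Phi)_{\Hh^2}=(\Phi,J\Phi)_{\Ll^2}$; since $\Phi=(\phi,0)$ and $J\Phi=(0,\phi)$, this inner product vanishes. Similarly, $(\eta,\Phi')_{\Hh^2}=(\Phi,\Phi')_{\Ll^2}=(\phi,\phi')_{L^2}=0$, because $\phi$ is even. In particular, $\eta\in\{J\Phi,\Phi'\}^\perp$.

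Given any $v\in\{J\Phi,\Phi'\}^\perp$, I would decompose
$$
v=a\eta+w,\qquad a:=\frac{(v,\eta)_{\Hh^2}}{\|\eta\|^2_{\Hh^2}},\qquad w:=v-a\eta.
$$
By construction $w\perp\eta$ in $\Hh^2$, and since both $v$ and $\eta$ belong to $\{J\Phi,\Phi'\}^\perp$, so does $w$. Hence $w\in\Z$, and Lemma \ref{prinlemma} yields $(Sw,w)_{\Hh^2}\geq\delta\|w\|^2_{\Hh^2}$. Moreover, the decomposition gives $(\eta,v)_{\Hh^2}^2=a^2\|\eta\|^4_{\Hh^2}$ and $\|v\|^2_{\Hh^2}=a^2\|\eta\|^2_{\Hh^2}+\|w\|^2_{\Hh^2}$.

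Expanding the self-adjoint quadratic form associated with $S$,
$$
(Sv,v)_{\Hh^2}=a^2(S\eta,\eta)_{\Hh^2}+2a(S\eta,w)_{\Hh^2}+(Sw,w)_{\Hh^2},
$$
and controlling the cross term by Young's inequality, $|2a(S\eta,w)_{\Hh^2}|\leq \tfrac{\delta}{2}\|w\|^2_{\Hh^2}+\tfrac{2}{\delta}\|S\eta\|^2_{\Hh^2}\,a^2$, one arrives at
$$
(Sv,v)_{\Hh^2}+2M(\eta,v)_{\Hh^2}^2\geq a^2\Bigl[(S\eta,\eta)_{\Hh^2}-\tfrac{2}{\delta}\|S\eta\|^2_{\Hh^2}+2M\|\eta\|^4_{\Hh^2}\Bigr]+\tfrac{\delta}{2}\|w\|^2_{\Hh^2}.
$$
Since $\|\eta\|_{\Hh^2}>0$, it suffices to fix $M$ large enough that the bracketed expression exceeds $\tfrac{\delta}{2}\|\eta\|^2_{\Hh^2}$; the resulting inequality then dominates $\tfrac{\delta}{2}\|v\|^2_{\Hh^2}$, yielding the claim with some $\delta'>0$ in place of $\delta$.

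No serious obstacle is expected: once the orthogonality of $\eta=\RR^{-1}\I\Phi$ to $J\Phi$ and $\Phi'$ is verified via \eqref{a10.1}, the argument is a standard coercivity-from-codimension-one-perturbation trick, where the $M$-term precisely compensates for the possibly indefinite contribution $(S\eta,\eta)_{\Hh^2}$ along the missing direction.
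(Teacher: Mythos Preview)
Your proposal is correct and follows essentially the same route as the paper's proof: verify via \eqref{a10.1} that $\RR^{-1}\I\Phi$ is $\Hh^2$-orthogonal to $J\Phi$ and $\Phi'$, decompose $v$ orthogonally along this direction so that the remainder lands in $\Z$, invoke Lemma \ref{prinlemma} on the remainder, absorb the cross term with Young's inequality, and choose $M$ large. The only cosmetic difference is that the paper normalizes the direction (writing $w=\RR^{-1}\I\Phi/\|\RR^{-1}\I\Phi\|_{\Hh^2}$) and first writes a four-term decomposition $v=aw+bJ\Phi+c\Phi'+z$ before observing $b=c=0$, whereas you go directly to the two-term decomposition.
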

\begin{proof}
From \eqref{a10.1}, we infer
\begin{equation}\label{l7}
\big(\RR^{-1}\I\Phi,J\Phi\big)_{\Hh^2}=(\Phi,J\Phi)_{\Ll^2}=0
\end{equation}
and
\begin{equation}\label{l8}
\big(\RR^{-1}\I\Phi,\Phi'\big)_{\Hh^2}=(\Phi,\Phi')_{\Ll^2}=0.
\end{equation}
Let $w:=\RR^{-1}\I\Phi/\|\RR^{-1}\I\Phi\|_{\Hh^2}$. Thus, given any $v\in\{J\Phi,\Phi'\}^\perp$, we can find real constants $a,b$, and $c$ such that
\begin{equation}\label{l9}
v=aw+bJ\Phi+c\Phi'+z,
\end{equation}
with $z\in\{J\Phi,\Phi',\RR^{-1}\I\Phi\}^\perp$. By taking the scalar product in \eqref{l9} with $J\Phi$ and $\Phi'$, respectively, it is easily seen that $b=c=0$. As a consequence,
$$
v=aw+z,
$$
with $a=(v,w)_{\Hh^2}$. Since $z\in\{J\Phi,\Phi',\RR^{-1}\I\Phi\}^\perp$, Lemma \ref{prinlemma} implies
\begin{equation}\label{l10}
\begin{split}
\big(Sv,v\big)_{\Hh^2}&=a^2\big(Sw,w\big)_{\Hh^2}+2a\big(Sw,z\big)_{\Hh^2}+\big(Sz,z\big)_{\Hh^2}\\
&\geq a^2\big(Sw,w\big)_{\Hh^2}+2a\big(Sw,z\big)_{\Hh^2}+\delta\|z\|^2_{\Hh^2}.
\end{split}
\end{equation}
But, from Cauchy-Schwartz and Young's inequalities,
$$
2a\big(Sw,z\big)_{\Hh^2}\leq \frac{\delta}{2}\|z\|^2_{\Hh^2}+\frac{2a^2}{\delta}\|Sw\|^2_{\Hh^2}.
$$
Therefore,
$$
\big(Sv,v\big)_{\Hh^2}\geq a^2\big(Sw,w\big)_{\Hh^2}-\Big(\frac{\delta}{2}\|z\|^2_{\Hh^2}+\frac{2a^2}{\delta}\|Sw\|^2_{\Hh^2}\Big)+\delta\|z\|^2_{\Hh^2}
$$
Let $\sigma:=\|\RR^{-1}\I\Phi\|_{\Hh^2}$. In this way,
$$
\big(\RR^{-1}\I\Phi,v\big)_{\Hh^2}=\sigma\big(w,v)_{\Hh^2}=a\sigma.
$$
Also, choose $M>0$ large enough such that
\begin{equation}\label{l11}
\big(Sw,w\big)_{\Hh^2}-\frac{2}{\delta}\|Sw\|^2_{\Hh^2}+2M\sigma^2\geq\frac{\delta}{2}.
\end{equation}
Note that $M$ does not depend on $v$. Now, from \eqref{l11}, we can write
\begin{equation*}
\begin{split}
\big(Sv,v\big)_{\Hh^2}&+2M\big(\RR^{-1}\I\Phi,v\big)_{\Hh^2}^2\\
&\geq a^2\big(Sw,w\big)_{\Hh^2}-\Big(\frac{\delta}{2}\|z\|^2_{\Hh^2}+\frac{2a^2}{\delta}\|Sw\|^2_{\Hh^2}\Big)+\delta\|z\|^2_{\Hh^2}+2Ma^2\sigma^2\\
&=a^2\Big( \big(Sw,w\big)_{\Hh^2}-\frac{2}{\delta}\|Sw\|^2_{\Hh^2}+2M\sigma^2\Big)+\frac{\delta}{2}\|z\|^2_{\Hh^2}\\
&\geq \frac{\delta}{2}\Big(a^2+\|z\|^2_{\Hh^2}\Big)\\
&=\frac{\delta}{2}\|v\|^2_{\Hh^2}.
\end{split}
\end{equation*}
The last equality follows because $w$ and $z$ are orthogonal in
$\Hh^2$. The proof of the lemma is thus completed.
\end{proof}

 Let us set
$$
q_1=G(\Phi), \qquad q_2=F(\Phi).
$$
Given any positive constant $M$, define $V:\Hh^2\to\R$ by
\begin{equation}\label{l12}
V(v)=G(v)-q_1+M(F(v)-q_2)^2.
\end{equation}

We now prove the main result of this subsection.

\begin{proposition}\label{lyalemma2}
There exists $M>0$ such that the functional defined in \eqref{l12} is a Lyapunov function for the orbit $\Omega_\Phi$.
\end{proposition}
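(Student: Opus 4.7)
My plan is to pick $M > 0$ as provided by Lemma \ref{lyalemma1} and verify the four properties of Definition \ref{lydef} in turn. Properties (i), (iii), (iv) should follow almost directly from the Hamiltonian/invariance structure of the problem. Property (iv) is an immediate consequence of conservation of $E$ and $F$ via Theorem \ref{localteo}; $V$ is polynomial of degree four in $v$ and hence smooth on all of $\Hh^2$; for $v = T_1(\theta)T_2(r)\Phi \in \Omega_\Phi$, the phase/translation invariance of $E$ and $F$ yields $G(v) = q_1$ and $F(v) = q_2$, so $V(v) = 0$. Differentiating the identities $G(T_1(\theta)T_2(r)u) \equiv G(u)$ in $u$ shows that $G'$ is equivariant, hence $G'(v) = T_1(\theta)T_2(r)G'(\Phi) = 0$ and $V'(v) = 0$. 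Property (iii) would follow by differentiating $G(T_1(\theta)v) \equiv G(v)$ at $\theta = 0$, which yields $\langle G'(v), -Jv\rangle = 0$, together with the analogous identities for $F$ and for $T_2$.

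The real content is property (ii), and this is where I would concentrate the work. The plan is to localize on a tubular neighborhood of the orbit and Taylor-expand. Given $v \in \Omega_\Phi^\rho$ for $\rho$ small, Lemma \ref{mlema} produces $\theta_1, r_1$ so that $\Phi_1 := T_1(\theta_1)T_2(r_1)\Phi$ lies on the orbit and $h := v - \Phi_1$ satisfies $\|h\|_{\Hh^2} < \rho$ together with $(h, J\Phi_1)_{\Hh^2} = (h, \partial_x\Phi_1)_{\Hh^2} = 0$. Since $G$ is polynomial of degree four and $F$ polynomial of degree two in $v$, Taylor expansion at $\Phi_1$ (using $G'(\Phi_1) = 0$) gives
\[ G(v) - q_1 = \tfrac{1}{2}\langle G''(\Phi_1)h, h\rangle + N(h), \qquad F(v) - q_2 = (\Phi_1, h)_{\Ll^2} + \tfrac{1}{2}\|h\|_{\Ll^2}^2, \]
where $N(h)$ collects the cubic and quartic terms and satisfies $|N(h)| \leq C\|h\|_{\Hh^2}^3$ via Sobolev embedding. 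Substituting,
\[ V(v) = \tfrac{1}{2}\langle G''(\Phi_1)h, h\rangle + M(\Phi_1, h)_{\Ll^2}^2 + O(\|h\|_{\Hh^2}^3). \]

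The final step is to invoke Lemma \ref{lyalemma1}, but that coercivity estimate lives at $\Phi$ and must be transferred to $\Phi_1$; I expect this to be the only genuinely nontrivial point in the argument. My resolution is an equivariance argument: the unitary map $T := T_1(\theta_1)T_2(r_1)$ is an isometry of both $\Ll^2$ and $\Hh^2$ and commutes with $J$ and $\partial_x$, so $G''(\Phi_1) = T G''(\Phi) T^{-1}$, $T^{-1}(J\Phi_1) = J\Phi$, $T^{-1}\partial_x\Phi_1 = \Phi'$, and $(\Phi_1, h)_{\Ll^2} = (\Phi, T^{-1}h)_{\Ll^2}$. In particular $T^{-1}h \in \{J\Phi, \Phi'\}^\perp$, and Lemma \ref{lyalemma1} (with the same $M$ and $\delta$ already fixed) applied to $T^{-1}h$ rewrites as
\[ \tfrac{1}{2}\langle G''(\Phi_1)h, h\rangle + M(\Phi_1, h)_{\Ll^2}^2 \geq \tfrac{\delta}{2}\|h\|_{\Hh^2}^2. \]
Plugging this into the expansion of $V(v)$ and shrinking $\rho$ to absorb the $O(\|h\|^3)$ remainder then gives $V(v) \geq \tfrac{\delta}{4}\|h\|_{\Hh^2}^2 \geq \tfrac{\delta}{4}[d(v,\Omega_\Phi)]^2$, which is (ii) with $c = \delta/4$.
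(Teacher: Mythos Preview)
Your proof is correct and follows essentially the same strategy as the paper. The only cosmetic difference is in the treatment of property (ii): the paper first establishes the coercive lower bound $V(v)\geq\frac{\delta}{4}\|v-\Phi\|_{\Hh^2}^2$ for $v$ near $\Phi$ with $v-\Phi\in\{J\Phi,\Phi'\}^\perp$, and then for general $v\in\Omega_\Phi^\rho$ pulls back via $u=T_1(-\theta_1)T_2(-r_1)v$ and invokes the invariance $V(v)=V(u)$ directly; you instead Taylor-expand at $\Phi_1$ and push the coercivity of Lemma~\ref{lyalemma1} forward by the conjugation $G''(\Phi_1)=TG''(\Phi)T^{-1}$. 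These are the same argument read in two directions.
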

\begin{proof}
Since $E$ and $F$ are smooth conserved quantities of \eqref{hamiltonian} and the Cauchy problem associated with \eqref{hamiltonian} is globally well-posed (see Corollary \ref{globalteo}), it is clear that part (iv) in Definition \ref{lydef} is satisfied and $V$ is of Class $C^2$. Since $V(\Phi)=0$ and the functionals $E$ and $F$ are invariant by the transformations \eqref{l0} and \eqref{l00}, we have $V(v)=0$, for all $v\in\Omega_\Phi$. In addition, because
\begin{equation}\label{l13}
\langle V'(u),v\rangle=\langle G'(u),v\rangle+2M(F(u)-q_2)\langle F'(u),v\rangle
\end{equation}
for all $u,v\in\Hh^2$, and $\Phi$ is a critical point of $G$ (see
\eqref{crit}), it also clear that $V'(\Phi)=0$. By observing that
$T_1(\theta)T_2(r)\Phi$ is also a critical point $G$, it then
follows that $V'(v)=0$ for all $v\in \Omega_\Phi$. Part (i) of
Definition \ref{lydef} is also established for any $\rho>0$.

Taking the advantage that
$$
F(T_1(\theta)v)=F(v), \qquad E(T_1(\theta)v)=E(v)
$$
and
$$
F(T_2(r)v)=F(v), \qquad E(T_2(r)v)=E(v)
$$
for all $\theta,r\in\R$ and $v\in\Hh^2$, by taking the derivatives with respect to $\theta$ and $r$ it is not difficult to see that part (iii) in Definition \ref{lydef} is also satisfied for any $\rho>0$.

Finally, let us check part (ii). From \eqref{l13}, we obtain
$$
\langle V''(u)v,v\rangle=\langle G''(u)v,v\rangle+2M(F(u)-q_2)\langle F''(u)v,v\rangle+2M\langle F'(u),v\rangle^2.
$$
In particular,
\begin{equation*}
\begin{split}
\langle V''(\Phi)v,v\rangle&=\langle G''(\Phi)v,v\rangle+2M\langle F'(\Phi),v\rangle^2\\
&=\big(\RR^{-1}G''(\Phi)v,v)_{\Hh^2}+2M\big(\RR^{-1}F'(\Phi),v)_{\Hh^2}^2\\
&=\big(Sv,v)_{\Hh^2}+2M\big(\RR^{-1}F'(\Phi),v)_{\Hh^2}^2.
\end{split}
\end{equation*}
Now recall that
$$
F(U)=\frac{1}{2}(U,U)_{\Ll^2}=\frac{1}{2}(\RR^{-1}\I U,U)_{\Hh^2}=\frac{1}{2}\langle \I U,U\rangle
$$
which implies that $F'=\I$. Hence,
$$
\langle
V''(\Phi)v,v\rangle=\big(Sv,v)_{\Hh^2}+2M\big(\RR^{-1}\I\Phi,v)_{\Hh^2}^2.
$$
Thus, from Lemma \ref{lyalemma1} we deduce the existence of  positive constants $\delta$ and $M$ such that
\begin{equation}\label{l14}
\langle V''(\Phi)v,v\rangle\geq \delta\|v\|_{\Hh^2}^2,
\end{equation}
for all $v\in\{J\Phi,\Phi'\}^\perp$. Since $V$ is of class $C^2$, a Taylor expansion gives
$$
V(v)=V(\Phi)+\langle V'(\Phi),v-\Phi\rangle+\frac{1}{2}\langle V''(\Phi)(v-\Phi),v-\Phi\rangle+h(v),
$$
where $h$ is a function satisfying
$$
\lim_{v\to\Phi}\frac{h(v)}{\|v-\Phi\|^2_{\Hh^2}}=0.
$$
Thus, if $R$ is the constant appearing in Lemma \ref{mlema}, without loss of generality, we can select $\rho\in(0,R/2)$ such that
\begin{equation}\label{l15}
|h(v)|\leq\frac{\delta}{4}\|v-\Phi\|^2_{\Hh^2},  \quad \mbox{for all}\, v\in B_\rho(\Phi).
\end{equation}
By noting that $V(\Phi)=0$ and $V'(\Phi)=0$, and using \eqref{l14} and \eqref{l15}, it follows that
\begin{equation}\label{l16}
\begin{split}
V(v)&=\frac{1}{2}\langle V''(\Phi)(v-\Phi),v-\Phi\rangle+h(v)\\
&\geq \frac{\delta}{2}\|v-\Phi\|^2_{\Hh^2}-\frac{\delta}{4}\|v-\Phi\|^2_{\Hh^2}\\
&=\frac{\delta}{4}\|v-\Phi\|^2_{\Hh^2}\\
&\geq \frac{\delta}{4}[d(v,\Omega_\Phi)]^2,
\end{split}
\end{equation}
provided that $\|v-\Phi\|_{\Hh^2}<\rho$ and $v-\Phi\in\{J\Phi,\Phi'\}^\perp$.

Now take any $v\in\Omega_\Phi^\rho$. Since $\rho<R/2<R$, from Lemma
\ref{mlema} there exist $\theta_1,r_1\in\R$ such that
$u:=T_1(-\theta_1)T_2(-r_1)v\in B_\rho(\Phi)$ and
$$
\big(v-T_1(\theta_1) T_2({r_1})\Phi,J T_1(\theta_1)
T_2({r_1})\Phi)\big)_{\Hh^2} = \big(v-T_1(\theta_1)
T_2({r_1})\Phi,T_1(\theta_1) T_2({r_1})\Phi')\big)_{\Hh^2}=0,
$$
which mean that
$\|u-\Phi\|_{\Hh^2}<\rho$ and $u-\Phi\in\{J\Phi,\Phi'\}^\perp$. Consequently, \eqref{l16} implies
$$
V(v)=V(u)\geq \frac{\delta}{4}[d(u,\Omega_\Phi)]^2=\frac{\delta}{4}[d(v,\Omega_\Phi)]^2.
$$
This proves part (ii) and completes the proof of the lemma.
\end{proof}

\subsection{Orbital stability in the energy space}

Now we prove our main orbital stability result.

\begin{theorem}\label{stateo}
Let $\alpha=4/25$. Let $\phi$ be the solution of \eqref{soleq} given in \eqref{solitary}. Then, the standing wave
$$
\Theta(x,t)=\left(\begin{array}{c}
\phi(x)\cos(\alpha t) \\
\phi(x)\sin(\alpha t)
\end{array}\right)
$$
is orbitally stable in $\Hh^2$.
\end{theorem}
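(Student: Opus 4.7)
The plan is to use the Lyapunov function $V$ from Proposition \ref{lyalemma2} together with the global well-posedness (Corollary \ref{globalteo}) to run the standard Cazenave--Lions--style stability argument. Concretely, I would fix once and for all the radius $\rho>0$ and the constant $c>0$ for which $V$ is defined and satisfies $V(v)\ge c\,[d(v,\Omega_\Phi)]^2$ on $\Omega_\Phi^\rho$, with $V$ also enjoying $V|_{\Omega_\Phi}=0$, $V'|_{\Omega_\Phi}=0$, and conservation along the flow. Because $V\in C^2$ and $V(\Phi)=0$ with $V'(\Phi)=0$, a Taylor expansion at $\Phi$ (as already used in the proof of Proposition \ref{lyalemma2}) yields a modulus of continuity for $V$ near $\Phi$ of the form $V(v)=O(\|v-\Phi\|_{\Hh^2}^2)$.

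Given $\varepsilon>0$, I would shrink it if necessary so that $\varepsilon<\rho$. Using the just-mentioned Taylor estimate, I would choose $\delta\in(0,\varepsilon)$ so that $\|U_0-\Phi\|_{\Hh^2}<\delta$ implies $V(U_0)<c\varepsilon^2$. Corollary \ref{globalteo} provides a global-in-time solution $U(\cdot)\in C([0,\infty);\Hh^2)$ of \eqref{hamiltonian} with $U(0)=U_0$. Define
$$
T=\sup\{t\ge 0 : d(U(s),\Omega_\Phi)<\varepsilon \text{ for all } s\in[0,t]\}.
$$
Since $d(U_0,\Omega_\Phi)\le\|U_0-\Phi\|_{\Hh^2}<\delta<\varepsilon$ and $t\mapsto d(U(t),\Omega_\Phi)$ is continuous (as the infimum of a family of continuous functions in $t$), we have $T>0$.

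The main point is to show $T=\infty$. Suppose, for contradiction, that $T<\infty$; by continuity, $d(U(T),\Omega_\Phi)=\varepsilon$. Since $\varepsilon<\rho$, $U(T)\in\Omega_\Phi^\rho$ and the Lyapunov coercivity applies at $U(T)$. Combining this with conservation of $V$ along the flow (part (iv) of Definition \ref{lydef}, which holds because $E$ and $F$ are conserved by Theorem \ref{localteo}), we would get
$$
c\varepsilon^2=c[d(U(T),\Omega_\Phi)]^2\le V(U(T))=V(U_0)<c\varepsilon^2,
$$
a contradiction. Hence $T=\infty$, which is exactly the statement of orbital stability in Definition \ref{stadef}.

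I do not expect any serious obstacles here: all heavy lifting (spectral analysis, the positivity of $G''(\Phi)$ modulo the degeneracies, the construction of $V$, and global well-posedness) has been done. The only delicate point is the bootstrap/continuity argument justifying that $T$ cannot be finite, which is handled by the continuity of $t\mapsto d(U(t),\Omega_\Phi)$ and the fact that $\varepsilon$ was chosen strictly smaller than the radius $\rho$ where the coercive lower bound on $V$ is valid.
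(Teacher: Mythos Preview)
Your proposal is correct and follows essentially the same Lyapunov/bootstrap argument as the paper's proof. The only cosmetic difference is that the paper chooses $\delta$ so that $V(v)<c\min\{\rho^2/4,\varepsilon^2\}$ and runs the continuation argument inside $\Omega_\Phi^\rho$, whereas you first shrink $\varepsilon<\rho$ and run the continuation inside $\Omega_\Phi^\varepsilon$; both variants yield the same contradiction $c\varepsilon^2\le V(U(T))=V(U_0)<c\varepsilon^2$.
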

\begin{proof}
The proof is similar to that of Proposition 4.1 in \cite{st}. For the sake of completeness we bring some details. Fix $\ve>0$ and let $V:\Omega_\Phi^\rho\to\R$ be the Lyapunov function given in Proposition \ref{lyalemma2}. Since $V(\Phi)=0$ and $V$ is continuous, there exists $\delta\in(0,\rho)$ such that
$$
V(v)=V(v)-V(\Phi)<c\min\left\{\frac{\rho^2}{4},\ve^2 \right\}, \qquad v\in B_\delta(\Phi),
$$
where $c>0$ is the constant appearing in Definition \ref{lydef}.
The invariance of $V$ with respect to the symmetries in \eqref{l0} and \eqref{l00} then yields
\begin{equation}\label{e1}
V(v)<c\min\left\{\frac{\rho^2}{4},\ve^2 \right\}, \qquad v\in \Omega_\Phi^\delta.
\end{equation}
Let $U_0\in \Hh^2$ be a function such that $U_0\in B_\delta(\Phi)$. From Corollary \ref{globalteo}, the solution, say $U(t)$, of the Cauchy problem associated to \eqref{hamiltonian} with initial data $U_0$ is defined for all $t\geq0$. Let $I$ be the interval defined as
$$
I=\{s>0; \, U(t)\in \Omega_\Phi^\rho \,\,\mbox{for\,\,all}\,\,t\in[0,s)\}.
$$
 The fact that $\delta\in(0,\rho)$ and the continuity of $U(t)$ implies that $I$ is a non-empty interval with $\inf I=0$. We want to show that $I$ is the whole positive semi-line, that is, $s^*:=\sup I=\infty$. Assume by contradiction that $s^*<\infty$. Parts (ii) and (iv) in Definition \ref{lydef} gives
$$
c[d(U(t),\Omega_\Phi)]^2\leq V(U(t))=V(U_0)<c\frac{\rho^2}{4},
$$
for all $t\in[0,s^*)$, where in the last inequality we have used the fact that $U_0\in B_\delta(\Phi)$ and \eqref{e1}. Thus, we deduce that $d(U(t),\Omega_\Phi)<\rho/2$ for all $t\in[0,s^*)$. It is clear that the continuity of $U(t)$ implies the continuity of the function $t\mapsto d(U(t),\Omega_\Phi)$. Consequently, $d(U(s^*),\Phi)\leq\rho/2$. The continuity of $U(t)$ implies again that $\sup I>s^*$, which is a contradiction. Therefore, $I=[0,\infty)$ and
$$
c[d(U(t),\Omega_\Phi)]^2\leq V(U(t))=V(U_0)<c\ve^2
$$
for all  $t\geq0$. The proof of the theorem is thus completed.
\end{proof}

\section*{Acknowledgments}

F. Natali is partially supported by Funda\c c\~ao Arauc\'aria/Paran\'a/Brazil and CNPq/Brazil. A. Pastor is
partially supported by CNPq/Brazil and FAPESP/Brazil.

\end{document}